\newtheorem{theorem}{\bf Theorem}[section]
\newtheorem{lemma}[theorem]{\bf Lemma}
\newtheorem{definition}[theorem]{\bf Definition}
\newtheorem{proposition}[theorem]{\bf Proposition}
\numberwithin{equation}{section}
\newcommand{\R}{\mathbb{R}}
\newcommand{\N}{\mathbb{N}}
\def \L {\mathscr{L}}
\def \kk {{\widehat k}}
\def \loc {{\text{\rm loc}}}
\def \p {\partial}
\def \tilde {\widetilde}
\def \rho {\varrho}
\def \theta {\vartheta}
\newenvironment{sergiorev}{\color{blue}}{\color{black}}
\newenvironment{giuliorev}{\color{red}}{\color{black}}
\newcommand{\bsr}{\begin{sergiorev}}
\newcommand{\esr}{\end{sergiorev}}
\newcommand{\bgr}{\begin{giuliorev}}
\newcommand{\egr}{\end{giuliorev}}
\newenvironment{cancelrev}{\color{green}}{\color{black}}
\newcommand{\bcanr}{\begin{cancelrev}}
\newcommand{\ecanr}{\end{cancelrev}}
\newcommand{\denoterow}[1]{\rlap{\hspace{1em}$\leftarrow{}$ $j^{th}$}}
\def \D {{\Delta}}
\newenvironment{proof}{\noindent {\sc Proof.}}{\hfill $\square$}
\def \o {{\omega}}
\def \a {{\alpha}}
\def \b {{\beta}}
\def \d {{\delta}}
\def \e {{\varepsilon}}
\def \epsilon {{\varepsilon}}
\def \r {{\rho}}
\def \t {{\tau}}
\def \x {{\xi}}
\def \y {{\eta}}
\def \w {{\omega}}
\def \phi {{\varphi}}
\def \G {{\Gamma}}
\def \O {{\Omega}}
\definecolor{shadecolor}{rgb}{.9, .9, 1}
\title{A study of the Kuramoto model for synchronization phenomena based on degenerate Kolmogorov-Fokker-Planck equations}
\author{{\sc{G. Pecorella}
\thanks{Dipartimento di Scienze Fisiche, Informatiche e Matematiche, Universit\`{a} di Modena e Reggio Emilia, Via
Campi 213/b, 41125 Modena (Italy). E-mail: giulio.pecorella@unimore.it} \qquad 
\sc{S. Polidoro}
\thanks{Dipartimento di Scienze Fisiche, Informatiche e Matematiche, Universit\`{a} di Modena e Reggio Emilia, Via
Campi 213/b, 41125 Modena (Italy). E-mail: sergio.polidoro@unimore.it} \qquad 
\sc{C. Vernia}
\thanks{Dipartimento di Scienze Fisiche, Informatiche e Matematiche, Universit\`{a} di Modena e Reggio Emilia, Via
Campi 213/b, 41125 Modena (Italy). E-mail: cecilia.vernia@unimore.it}
}}
\date{}
\begin{document}

\maketitle

\begin{abstract}
We study a nonlinear partial differential equation that arises when introducing inertial effects in the Kuramoto model. Based on the known theory of degenerate Kolmogorov operators, we prove existence, uniqueness and a priori estimates of the solution to the relevant Cauchy problem. Moreover, a stable numerical operator, which is consistent with the degenerate Kolmogorov operator, is introduced in order to produce numerical solutions. Finally, numerical experiments show how the synchronization phenomena depend on the parameters of the Kuramoto model with inertia.
\end{abstract}

\bigskip
\normalsize
\tableofcontents

\section{Introduction}
One of the most fascinating phenomena in nature is the tendency to synchronization. It pervades nature at every scale from the nucleus to the cosmos, and even our heart exhibits this phenomenon. A cluster of about $10,000$ cells, called the sinoatrial node, generates the electrical rhythm that commands the rest of the heart to beat, and it must do so reliably, minute after minute, for about three billion beats in a lifetime. These cells are a collection of oscillators, i.e. entities that cycle automatically in more or less regular time intervals, and need to coordinate their rhythm in order to achieve some kind of synchronization.

Huygens was one of the pioneers in the study of synchronization. In February of 1665 the Dutch physicist was confined to his bedroom for several days, and observed a curious phenomenon: the two pendulum clocks in the room, separated by two feet, kept oscillating together without any variation; moreover, mixing up the swings of the pendulums the synchronization returned after some time. Intrigued by this phenomenon, he carried out several experiments, finding out that the synchronization could take place only if they could communicate in some way.

In the following centuries many mathematicians studied the problem, coming up with different models: some of these describe specific problems (see \cite{Pe} for the description of sinoatrial node synchronization), while others have been created with the aim of being general enough to  describe the dynamics of different systems, even relevant to very different scientific fields such as biology, physics, and social sciences as well. One of the most famous model that belongs to the latter group is the one proposed by Kuramoto, from which many variations have been devised. We refer to the article \cite{Sp} by Spigler for an overview of this model and some of its derivatives. We next describe the Kuramoto model and some of its modifications we are interested in, then we discuss our contributions on this subject.

\medskip

The Kuramoto model is a system of ordinary differential equations introduced in the late decades of the last century in \cite{Ku}, that describes the collective behaviour of a population that can be seen as a group of oscillators. Let's consider a population of $N$ oscillators, we denote as $\theta_k(t)$ the phase related to the $k$-th oscillator, and as $\Omega_k$ its natural frequency, which is an intrinsic parameter of the $k$-th oscillator. We suppose that these natural frequencies are distributed according to an unimodal distribution $g=g(\Omega)$ that is normalized and symmetric with respect to its mean frequency $\overline{\Omega}$. The governing equation is given by the following system of ordinary differential equations
\begin{equation}\label{kuramoto}
  \dot{\theta}_k = \Omega_k + \frac{K}{N} \sum_{j=1}^N \sin(\theta_j-\theta_k),\qquad\;k=1,...,N,
\end{equation} 
where $K$ is a real positive constant sizing the coupling strength between the oscillators. Note that the coupling in the above system is nonlinear, thus the ensuing phenomena is expected to be rather complicated. On the other hand, a mean-field coupling, which is present in \eqref{kuramoto}, significantly simplify the evolution of the system. In order to point out this coupling effect, we introduce the following complex number, called \textit{order parameter}

\begin{equation}\label{parameterorder}
  r_N(t)\mathrm{e}^{i\psi_N(t)} = \frac{1}{N} \sum_{j=1}^N \mathrm{e}^{i\theta_j(t)}.
\end{equation}
The phase $\psi_N(t)$ is the mean phase of the system. The modulus $|r_N(t)|$ provides us with a measure of the synchronization of the system. When $|r_N(t)|=0$ the system is fully incoherent, when $0 < |r_N(t)| < 1$ the system is partly synchronized, and $|r_N(t)|=1$ matches the full coherence state. 
Equation \eqref{kuramoto} can be rewritten using the order parameter
\begin{equation}\label{kuramean}
  \dot{\theta}_k = \Omega_k + K r_N \sin(\psi_N-\theta_k),\qquad\;k=1,...,N,
\end{equation}
which is the equation of an overdamped pendulum with torque $\Omega_k$ and restoring force proportional to $Kr_N$. This formulation simplifies the analytical treatment.

Despite the simplicity of this model, Wiesenfeld, Colet and Strogatz show in \cite{Jos1, Jos2} that \eqref{kuramean} describes some important physical phenomena such as the interaction of quasi-optical oscillators with a cavity and some Josephson arrays, and it gives us a starting point to study synchronization in generic oscillators.
Moreover, Kuramoto model shows how synchronization in a population of many coupled oscillators may occur, and the type of synchronization: indeed it may occur frequency synchronization, where each oscillator completes its cycle at the same time, phase synchronization, where each oscillator is at the same point in the cycle, or even a more complex scenario.

The Kuramoto model can be extended to a population of infinitely many oscillators, as it was described by Strogatz in \cite{S}. Let's consider a continuum of oscillators, and let $\rho=\rho(\theta,\O,t)$ be a density function describing the fraction of oscillators at phase $\theta$ with natural frequency $\O$ at time $t$. Then $\rho$ is a nonnegative function $2\pi$ periodic in $\theta$ and satisfies 
\begin{equation}
\int_{0}^{2\pi} \rho(\theta,\O,t)d\theta=1,\qquad\forall\; (\O,t) \in \R\times [0,+\infty[.
\end{equation}
The evolution of $\rho$ is described by the usual continuity equation
\begin{equation}\label{eq-cont}
  \frac{\p \rho}{\p t}= -\frac{\p}{\p \theta}(\rho v),
\end{equation}
which expresses conservation of oscillators of frequency $\O$. Here $v = v(\theta,\O, t)$ is the instantaneous velocity of an oscillator at position $\theta$, given that it has natural frequency $\O$, and is interpreted in the Eulerian sense. From \eqref{kuramean} we see that that velocity is 
\begin{equation}
  v(\theta,\Omega,t)= \Omega + K r(t) \sin(\psi(t)-\theta),
\end{equation}
where $r(t)$ and $\psi(t)$ follow from \eqref{parameterorder} by letting $N \to + \infty$ 
\begin{equation}\label{eq-phase}
  r(t) \mathrm{e}^{\mathrm{i}\psi(t)}= \int_0^{2 \pi}\int_\R \mathrm{e}^{\mathrm{i}\theta'}\rho(\theta',\Omega',t)g(\Omega')d\Omega' d\theta'.
\end{equation}
Combining \eqref{eq-cont} and\eqref{eq-phase} we obtain the following nonlinear partial integro-differential equation for the density $\rho$
\begin{equation}\label{continuum}
  \frac{\p \rho}{\p t}(\theta,\Omega,t) = -\frac{\p}{\p \theta}\left[\rho(\theta,\Omega,t) \left(\Omega + K \int_0^{2 \pi}\int_\R \sin(\theta'-\theta)\rho(\theta',\omega',t)g(\Omega')d\Omega' d\theta'\right)\right].
\end{equation}
Sakaguchi in \cite{Sa} modified the deterministic equations \eqref{kuramean} and \eqref{continuum} by adding noise to describe stochastic phenomena, in order to consider rapid stochastic fluctuations in the natural frequencies.
The governing equations \eqref{kuramean} for $N$ oscillators take now the form of a system of Langevin equations
\begin{equation}\label{saga}
   \dot{\theta}_k = \Omega_k +\xi_k+ \frac{K}{N} \sum_{j=1}^N \sin(\theta_j-\theta_k),\qquad\;k=1,...,N,
\end{equation} 
where $\xi (t) = D \, W_t$ being $(W_t)_{t \ge0}$ a $N$-dimensional Wiener process, with $D > 0$.

Then, Sakaguchi argued intuitively that since \eqref{saga} is a system of Langevin equations with mean-field coupling, as $N \rightarrow \infty$ the density $\rho(\theta, \omega, t)$ should satisfy the following Fokker–Planck equation
\begin{equation}\label{parabolica}
  \frac{\p \rho}{\p t}= D \frac{\p^2 \rho}{\p\theta^2}-\frac{\p}{\p \theta}\left[\rho \left(\Omega + K \int_0^{2 \pi}\int_\R \sin(\theta'-\theta)\rho(\theta',\Omega',t)g(\Omega')d\Omega' d\theta'\right)\right].
\end{equation}
Note that \eqref{parabolica} is a nonlinear parabolic partial integro-differential equation, that reduces to \eqref{continuum}  when $D=0$. We finally recall that linear stability of the incoherent state, concerning both the deterministic and the stochastic equations \eqref{continuum} and \eqref{parabolica}, respectively, have been studied by Strogatz and Mirollo in \cite{SM}. The Cauchy problem relevant to \eqref{parabolica} has been studied in recent years by Lavrentiev, Spigler and Tani in \cite{Spp1}, \cite{Spp2} and \cite{Spp3}.

Based on some ideas developed by Ermentrout in his studies about fireflies synchronization \cite{E}, Tanaka, Lichtenberg and Oishi in \cite{TLO} elaborated the following second order variation of the Kuramoto model
\begin{equation}\label{tanaka}
  m \ddot{\theta}_k + \dot{\theta}_k = \Omega_k + \frac{K}{N}\sum_{j=1}^N \sin(\theta_j-\theta_k) ,\qquad k=1,...,N,
\end{equation}
with $m>0$. We point out that the term $m \ddot{\theta}_k$ introduces an inertial effect in the model, in order to better explain some synchronization phenomena. Unlike the original Kuramoto model, the Ermentrout's second order model has the property to allow near-perfect phase synchronization, where the phase shift between synchronized oscillators is inversely proportional to the mass. 
The equation \eqref{tanaka} can be rewritten in terms of the order parameter \eqref{parameterorder} as follows
\begin{equation}
    m \ddot{\theta}_k + \dot{\theta}_k = \Omega_k +K r_N \sin(\psi_N-\theta_k),\qquad i=1,...,N,
\end{equation}
which is the governing equation for a single damped driven pendulum with torque $\Omega_k$.

Starting from the contributions due to Tanaka, Lichtenberg and Oishi \cite{TLO}, Acebroan and Spigler introduced a noise term $\xi_k=\xi_k(t)$ in \eqref{tanaka}, so that  they considered in \cite{AS} a system of second order Langevin equations that consider an inertial term
\begin{equation}\label{spig}
  m \ddot{\theta}_k + \dot{\theta}_k=\Omega_k + K r_N \sin(\psi_N-\theta_k) + \xi_k,\qquad k=1,...,N.
\end{equation}
If we finally set $\dot{\theta}_k=\omega_k$, and we let $N \to \infty$ as we did in \eqref{parabolica}, we find the following nonlinear degenerate parabolic integro-differential equation
\begin{equation}\label{ultraparabolica} 
 \frac{D}{m^2} \frac{\p^2\rho}{\p\omega^2} + \frac{1}{m}\frac{\p}{\p\omega} [(\omega - \Omega - K_\rho(\theta,t))\rho] - \omega \frac{\p \rho}{\p\theta} -\frac{\p\rho}{\p t} =0,
\end{equation} 
where
\begin{equation} \label{kurainte}
K_\rho(\theta,t) = K \int_\R \int_\R \int_0^{2\pi} g(\Omega')\sin(\theta'-\theta)\rho(\omega',\theta',\Omega',t)d\theta'd\omega'd\Omega'.
\end{equation}

In order to simplify the notation, in the sequel we set $D=1$ and $m=1$. For a given $\rho_0 = \rho_0(\omega,\theta,\Omega)$ we consider the following Cauchy problem
\begin{equation} \label{eq-Cauchy-pbm}
  \begin{cases} 
 \frac{\p^2\rho}{\p\omega^2} + \frac{\p}{\p\omega} [(\omega - \Omega - K_\rho(\theta,t))\rho] - \omega \frac{\p \rho}{\p\theta} -\frac{\p\rho}{\p t} =0,\qquad &
	 (\o,\theta,\O,t) \in \R^3 \times [0,T], \\
 \rho (\omega,\theta,\O,0)=\rho_0(\omega,\theta,\Omega), & (\omega,\theta,\Omega) \in \R^3.
  \end{cases}
\end{equation} 
The Cauchy problem \eqref{eq-Cauchy-pbm} has been addressed by Akhmetov, Lavrentiev and Spigler in \cite{Sp1}, \cite{Sp2}, \cite{Sp3} and \cite{Sp4} by using a parabolic regularization of the governing equation.

The main result proved in \cite{Sp3} is the existence and the uniqueness of a classical solution $\rho$ to \eqref{eq-Cauchy-pbm}. Since $(\omega,\theta) \mapsto \rho(\omega,\theta,\Omega, t)$ denotes the density of a probability measure, the authors of the afore-mentioned references focus on positive solutions with the property 
\begin{equation*}
 \int_{]0, 2\pi[ \times \R} \rho(\omega,\theta,\Omega, t) d\theta \, d\omega= 1,
 \qquad t \ge 0, \ \Omega \in \R.
\end{equation*}
Moreover, the solutions considered there have the following \emph{exponential decay property}

\begin{definition}\label{def-expdecay}
We say that a function $f:\R^4 \rightarrow \R$ has the \emph{exponential decay property} if there exist two positive constants $M,C$ such that:
\begin{equation}\label{eq-decay}
(\text {\bf E}) \hspace{3.7cm}    |f(\o,\theta,\Omega,t)|\leq  C\mathrm{e}^{-M\omega^2}, 
\qquad \forall (\o,\theta,\Omega,t) \in \R^4. \hspace{3.7cm}
\end{equation}
\end{definition}

The main result proved in \cite{Sp3} is the existence of a unique positive classical solution $\rho$ to the Cauchy problem \eqref{eq-Cauchy-pbm}, with the properties listed above, provided that $\rho_0$ and some of its derivatives have the exponential decay property.

In this work we address the Cauchy problem \eqref{eq-Cauchy-pbm} by using the theory of subelliptic equations in Lie groups as described in the survey article \cite{AP}. In particular, the first order differential operator  $Y=\o\p_\o - \o\p_\theta - \p_t$ is considered as a \emph{Lie derivative}, that is

\begin{definition} \label{Def-Lie}
Consider the integral curve $\exp(sY)(z)$ of $Y$ defined for $z=(\o,\theta,\O,t) \in \R^4$ as the unique solution to
\begin{equation} \label{eq-expY}
\begin{cases}
 \frac{d}{d s} \exp(sY)(z) = Y \exp(sY)(z), \\
 \exp(sY)(z)\vert_{s=0}= (z).
\end{cases}
\end{equation}
A function $\rho$ is Lie differentiable with respect to $Y$ at the point $z=(\o,\theta,\O,t)$ if the following limit
\begin{equation}
  Y\rho(z)=\frac{d}{d s} \rho \left(\exp(sY)(z)\right)\big|_{s=0}
\end{equation}
exists and is finite.
\end{definition}
This definition clarifies the meaning of \emph{classical solution} to \eqref{ultraparabolica}. A function $\rho$ is a classical solution if it is continuous, its derivatives $\p_{\o}\rho$, and $\p^2_{\o\o}\rho$ are defined as continuous functions, the Lie derivative $Y \rho$ is continuous and the equation \eqref{ultraparabolica} is satisfied at every point. 

\medskip

The main advantage in our approach, with respect to \cite{Sp3}, is that we don't require unnecessary regularity on the data $\rho_0$. Moreover, our hypothesis \eqref{hp-g} below does not require the compactness of the support of $g$ assumed in\cite{Sp3}. Finally, a numerical approximation of the Lie derivative $Y\rho$ allows us to define a \emph{stable} numerical method for the Cauchy problem \eqref{eq-Cauchy-pbm}. Our first result is the following

\begin{theorem} \label{Th-main}
Let $g$ be a non-negative function such that
\begin{equation}
	\int_{\R}g(\O)d\O=1,
\end{equation}
and assume that exists $\b>2$ such that
\begin{equation}\label{hp-g}
	\int_{\R}g(\O)\mathrm{e}^{ |\O|^\b}	d\O <+\infty.
\end{equation}
Let $\rho_0 \in C(\R^3)$ be a strictly positive function, $2\pi$ periodic in $\theta$ such that
\begin{enumerate}
  \item $\rho_0$ verifies the exponential decay property {\rm ({\bf E})} as stated in Definition \ref{def-expdecay};
  \item for every $\Omega \in \R$ we have
  \begin{equation}
    \int_{]0, 2\pi[ \times \R} \rho_0(\o,\theta,\Omega) d\theta d\omega= 1.
  \end{equation}
\end{enumerate}
Then there exists a strictly positive classical solution $\rho$ to the Cauchy problem \eqref{eq-Cauchy-pbm}, defined for $(\o,\theta,\Omega,t) \in \R^3 \times [0, + \infty[$ such that
\begin{equation}
  \int_{]0, 2\pi[ \times \R} \rho(\o,\theta,\Omega,t) d\theta d\omega= 1,
   \qquad \text{for every} \quad t \ge 0, \ \Omega \in \R.
\end{equation}
Moreover $\rho$ is $2 \pi$ periodic in $\theta$, continuously depends on $\Omega$ and for every $T>0$ and $\O \in \R$  there exist positive constants $C_{\O,T}$, $\overline{M}$ such that  
\begin{enumerate}
\item  the function $(\o,\theta,\Omega,t) \mapsto \rho(\omega,\theta,\Omega, t)$ has the exponential decay property {\rm ({\bf E})}, with constants $\overline M$ and $C_{\O,T}$;
\item the function $(\o,\theta,\Omega,t) \mapsto \partial_\omega \rho(\omega,\theta,\Omega, t)$ has the exponential decay property {\rm ({\bf E})}, with constants $\overline M$ and $C_{\O,T}/\sqrt{t}$;
\item the function $(\o,\theta,\Omega,t) \mapsto \partial^2_\omega \rho(\omega,\theta,\Omega, t)$ has the exponential decay property {\rm ({\bf E})}, with constants $\overline M$ and $C_{\O,T}/t$. The same assertion holds for the Lie derivative $Y\rho =(\omega\p_\omega -\omega\p_\theta  -\p_t)\rho$.
\end{enumerate}
Furthermore, $\rho$ is the unique positive solution satisfying the properties listed above whenever $g$ has compact support.
\end{theorem}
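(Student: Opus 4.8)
\emph{The linearized problem.} The plan is to solve the nonlinear Cauchy problem \eqref{eq-Cauchy-pbm} by a fixed point argument on the non-local coefficient $K_\rho$, built on the linear theory of degenerate Kolmogorov operators surveyed in \cite{AP}. Fix $T>0$ and let $h=h(\theta,t)$ be continuous, $2\pi$-periodic in $\theta$, with $\|h\|_\infty\le K$. For every $\Omega\in\R$ I would consider the linear operator
\[
\mathcal{L}_h u:=\partial_\omega^2 u+\partial_\omega\big[(\omega-\Omega-h(\theta,t))u\big]-\omega\,\partial_\theta u-\partial_t u=\partial_\omega^2 u+Yu-(\Omega+h)\partial_\omega u+u ,
\]
that is, the model Kolmogorov operator $\partial_\omega^2+Y$ (with $Y$ as in Definition~\ref{Def-Lie}) perturbed by the \emph{bounded} lower order term $-(\Omega+h)\partial_\omega+1$. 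Since the $\omega$-derivative of the model fundamental solution is integrable in time like $(t-s)^{-1/2}$, this perturbation can be absorbed by a convergent Duhamel/parametrix series that uses only $\|h\|_\infty$ (and not any modulus of continuity of $h$), producing a positive fundamental solution $\Gamma_h^\Omega$ with Gaussian upper bounds and estimates
\[
\int_{\R^2}\big|\partial_{\omega'}^{\,j}\Gamma_h^\Omega(z;\omega',\theta',s)\big|\,d\omega'\,d\theta'\le C\,(t-s)^{-j/2},\qquad j=0,1,2,
\]
with $C$ depending only on $K$, $\Omega$ and $T$. Then $\rho^h(\cdot,\cdot,\Omega,t)=\Gamma_h^\Omega(\cdot,\cdot,t;\cdot,\cdot,0)\ast\rho_0(\cdot,\cdot,\Omega)$ is the unique solution in the decay class {\rm({\bf E})}; it is strictly positive (since $\Gamma_h^\Omega>0$ and $\rho_0>0$), is $2\pi$-periodic in $\theta$ (by the $\theta$-translation invariance of $\mathcal{L}_h$ and uniqueness), and, integrating the divergence-form equation over $]0,2\pi[\times\R$ and using {\rm({\bf E})} and periodicity, has unit mass $\int_{]0,2\pi[\times\R}\rho^h\,d\theta\,d\omega=1$.

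\emph{Decay estimates and the role of \eqref{hp-g}.} The drift $\Omega+h-\omega$ is mean-reverting, so $\Gamma_h^\Omega$ has $\omega$-variance bounded uniformly in $t\ge 0$; convolving a bound $C e^{-M\omega'^2}$ against it yields $|\rho^h|\le C_{\Omega,T}\,e^{-\overline M\omega^2}$ and $|\partial_\omega^j\rho^h|\le C_{\Omega,T}\,t^{-j/2}e^{-\overline M\omega^2}$, $j=1,2$, with $\overline M$ independent of $\Omega$ and $T$ and $C_{\Omega,T}\lesssim e^{c_T\Omega^2}$ (the $\Omega$-growth arising from completing the square in the mean $\Omega+h$), while the bound for $Y\rho^h$ follows from $Y\rho^h=-\partial_\omega^2\rho^h+(\Omega+h)\partial_\omega\rho^h-\rho^h$. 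As $\beta>2$ in \eqref{hp-g} gives $\int_\R g(\Omega)e^{c_T\Omega^2}\,d\Omega<\infty$, the quantity $K_{\rho^h}$ defined by \eqref{kurainte} is well defined, $2\pi$-periodic in $\theta$, and $\|K_{\rho^h}\|_\infty\le K$ thanks to the unit mass identity. This is exactly the point where the exponential moment \eqref{hp-g} replaces the compactness of $\operatorname{supp}g$ required in \cite{Sp3}.

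\emph{Fixed point, continuation and regularity.} I would then set $\mathcal{T}(h):=K_{\rho^h}$, a self-map of $B=\{h\in C([0,T_0]\times\R):\ 2\pi\text{-periodic in }\theta,\ \|h\|_\infty\le K\}$, and estimate $w=\rho^{h_1}-\rho^{h_2}$, which solves $\mathcal{L}_{h_1}w=\partial_\omega[(h_1-h_2)\rho^{h_2}]$ with zero initial datum. Duhamel's formula, an integration by parts in $\omega'$, the $j=1$ estimate above and the unit mass of $\rho^{h_2}$ give, uniformly in $\Omega$,
\[
\int_{]0,2\pi[\times\R}|w(\omega,\theta,\Omega,t)|\,d\theta\,d\omega\le C\int_0^t\frac{\|h_1(\cdot,s)-h_2(\cdot,s)\|_\infty}{\sqrt{t-s}}\,ds\le 2C\sqrt{T_0}\,\|h_1-h_2\|_{C([0,T_0]\times\R)} ,
\]
hence $\|\mathcal{T}(h_1)-\mathcal{T}(h_2)\|\le 2CK\sqrt{T_0}\,\|h_1-h_2\|$: a contraction once $T_0$ is small, depending only on $K$ and the structural constants. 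The fixed point $h^\ast$ gives $\rho:=\rho^{h^\ast}$, which solves \eqref{eq-Cauchy-pbm} on $[0,T_0]$; since $\rho(\cdot,T_0)$ again satisfies the hypotheses of the theorem, the construction restarts and iterates to reach $[0,+\infty[$. The qualitative properties of $\rho$ on each $[0,T]$ follow from the representation formula with $h=h^\ast$: positivity and unit mass as above, and items 1--3 as the decay bounds with $\overline M$ independent of $\Omega$ and $T$. Finally, differentiating \eqref{kurainte} and using those bounds shows $h^\ast=K_\rho$ is Lipschitz in $\theta$ and locally Hölder in $t$ on $\{t>0\}$, so the Schauder theory of \cite{AP} promotes $\rho$ to a classical solution with $\partial_\omega\rho,\partial_\omega^2\rho,Y\rho$ continuous; continuity in $\Omega$ follows from the continuous dependence of $\Gamma_h^\Omega$ on $\Omega$ (a smooth bounded perturbation) together with the continuity and uniform decay of $\Omega\mapsto\rho_0(\cdot,\cdot,\Omega)$.

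\emph{Uniqueness for compactly supported $g$, and the main obstacle.} If $\rho_1,\rho_2$ are two positive solutions with the listed properties and $h_i:=K_{\rho_i}$ (so $\|h_i\|_\infty\le K$), each $\rho_i(\cdot,\cdot,\Omega,\cdot)$ is the representation-formula solution of $\mathcal{L}_{h_i}\rho=0$, and the same Duhamel estimate gives, with $\operatorname{supp}g\subset[-R,R]$,
\[
\varphi(t):=\sup_{|\Omega|\le R}\int_{]0,2\pi[\times\R}|\rho_1-\rho_2|(\omega,\theta,\Omega,t)\,d\theta\,d\omega\le C\int_0^t(t-s)^{-1/2}\varphi(s)\,ds ,
\]
with $C$ uniform over the compact set $[-R,R]$; iterating this weakly singular Gronwall inequality forces $\varphi\equiv0$, so $h_1=h_2$, and then $\rho_1\equiv\rho_2$ by uniqueness of the linear Cauchy problem in the decay class, now for every $\Omega$ — compactness of $\operatorname{supp}g$ being what keeps the constants in the Gronwall loop uniform. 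The hard part of the whole argument is the first two steps: the Gaussian and $\omega$-derivative bounds for $\mathcal{L}_h$ must be proved with constants uniform in $h\in B$ (so that the contraction closes) and explicit in $\Omega$ (so that \eqref{hp-g} can replace the compact support of $g$), and the decay constants $\overline M$, $C_{\Omega,T}$ must then be carried through the global continuation; everything else is a routine consequence of the linear theory of \cite{AP}.
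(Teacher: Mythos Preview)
Your strategy---linearize, build the fundamental solution $\Gamma_h^\Omega$, represent the solution by convolution, and close a fixed-point loop via a Duhamel estimate on differences---is exactly the paper's, and your treatment of uniqueness via a weakly singular Gronwall inequality matches Proposition~\ref{Prop-unique}. The main packaging difference is that you run a Banach contraction directly on the scalar coefficient $h=K_\rho$, whereas the paper iterates on the densities $\rho_n$ and proves Cauchy-ness through a more elaborate time-slicing argument with exponentially weighted norms $\|\cdot\|_k$ (see \eqref{diff-rho_n}--\eqref{sommaq}); the paper also quotes the parametrix of \cite{DP}, which assumes H\"older coefficients, rather than building $\Gamma_h^\Omega$ for merely bounded $h$ and upgrading regularity a posteriori as you do. Both routes are viable; yours is shorter, the paper's stays within the ready-made theory of \cite{DP}.

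There is one genuine imprecision worth fixing: the place where you invoke \eqref{hp-g} is wrong. The quantity $K_{\rho^h}$ is well defined and satisfies $\|K_{\rho^h}\|_\infty\le K$ from the \emph{unit-mass identity alone}, since $|K_{\rho^h}|\le K\int_\R g(\Omega')\big(\int\!\!\int\rho^h\,d\theta'd\omega'\big)d\Omega'=K$; no moment condition on $g$ is needed there. In the paper, \eqref{hp-g} enters instead in the contraction/convergence step: the parametrix constants $\overline C_{\Omega,T}$ of \eqref{estimatesderivate} grow like $\sum_j(|\Omega|+K)^j/\Gamma_E(j/2)\sim e^{c(|\Omega|+K)^2}$ (see \eqref{explicit-c}), and one must control $G:=\int_\R g(\Omega)C_{\Omega,T}\,d\Omega$ in \eqref{G}, which is precisely where $\beta>2$ is used. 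Your Duhamel bound is stated ``uniformly in $\Omega$''; this is actually true---the substitution $(\omega,\theta)\mapsto(\omega-\Omega,\theta-\Omega t)$ removes $\Omega$ from the drift, leaving only $h(\tilde\theta+\Omega t,t)$ whose sup norm is still $\le K$---but you should say so, since without that observation the constant $C$ in your $L^1$ estimate inherits the $\Omega$-growth of $\overline C_{\Omega,T}$ and you would then need \eqref{hp-g} exactly as the paper does.
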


Following \cite{Sp3}, we look for a solution to the nonlinear Cauchy problem \eqref{eq-Cauchy-pbm} as the fixed point of a map that associates to a given function $\rho$ the solution to a suitable linearized problem, which in our case is relevant to a \emph{degenerate Kolmogorov equation}. With this aim, we introduce the following Cauchy problem
\begin{equation} \label{comune}
\begin{dcases}
  \p^2_{\omega \omega}u + \omega \p_\omega u - \omega \p_\theta u + \Phi_{\tilde{\rho}} (\theta,\Omega,t) \p_\omega u + u - \p_t u = f,\;\;\;\;&(\o,\theta,\O,t)\in\R^3 \times ]T_0,T_1[,\\
  u(\o,\theta,\O,T_0)=u_0(\omega,\theta,\Omega),\;\;&(\omega,\theta,\Omega) \in \R^3,
\end{dcases}
\end{equation}
where
\begin{equation} \label{eq-Phi-comune}
  \Phi_{\tilde{\rho}}(\theta,\Omega,t)=-\Omega - K  \int_\R\int_\R \int_0^{2\pi} g(\Omega')\sin(\theta'-\theta)\tilde{\rho}(\o',\theta',\Omega',t)d\theta'd\omega'd\Omega'.
\end{equation}
If $\tilde{\rho}$ ia s given continuous function satisfying the exponential decay property (see Definition \ref{def-expdecay}), then there exists a \emph{fundamental solution} $\G_{\Omega,\tilde{\rho}}$ useful to represent the solution $u$ to \eqref{comune} as follows
\begin{equation}
	\begin{aligned}
   u(\o,\theta,\Omega,t) = &\int_{\R^2}\G_{\Omega,\tilde{\rho}}(\o,\theta,t;\xi,\eta,T_0) u_0 (\xi,\eta,\Omega)d\eta d\xi-
   \\  -&\int_{T_0}^t \int_{\R^2}    \G_{\Omega, \tilde{\rho}} (\o,\theta,t;\xi,\eta,\tau) f(\xi,\eta,\Omega,\tau)d\eta  d\xi d\tau.
	\end{aligned}
\end{equation} 
when $f,u_0$ verify appropriate assumptions. In Section 2 we study the existence of $\G_{\Omega,\tilde{\rho}}$ and some bounds of $\G_{\Omega,\tilde{\rho}}$ and its derivatives, that provide us with the proof of the existence of the unique solution to \eqref{eq-Cauchy-pbm}.

\medskip

This paper is organized as follows: in Section 2 we recall some results about the fundamental solution and we give some bounds useful in the proof of Theorem \ref{Th-main}, Section 3 is devoted to the proof of Theorem \ref{Th-main}. In Section 4 we define a stable and consistent finite difference scheme that follows the structure of \eqref{ultraparabolica}.


\section{Fundamental Solution}
In this Section we recall some known facts about the partial differential operator
\begin{equation}
	\L_{\Omega,\tilde{\rho}} =	\p^2_{\omega \omega}  + \omega \p_\omega  - \omega \p_\theta  + \Phi_{\tilde{\rho}} (\theta,\O,t) \p_\omega  + 1 - \p_t ,
\end{equation}
appearing in the Cauchy problem \eqref{comune}, where the function $\Phi_{\tilde{\rho}}$ has been introduced in \eqref{eq-Phi-comune}
\begin{equation} 
  \Phi_{\tilde{\rho}}(\theta,\O,t)=-\Omega - K  \int_\R\int_\R \int_0^{2\pi} g(\Omega')\sin(\theta'-\theta)\tilde{\rho}(\o',\theta',\Omega',t)d\theta'd\omega'd\Omega'.
\end{equation}
Note that no derivatives with respect to $\O$ appear in $\L_{\Omega,\tilde{\rho}}$, then, from now on we fix $\O\in \R$ and we consider it as a parameter. 

Let us now recall some known facts about the degenerate differential operator $\L_{\Omega,\tilde{\rho}}$ that will be used in the proof of Theorem \ref{Th-main}. For a reason that will be clear in the following, we also introduce a parameter $\e >0$, and we set
\begin{equation}\label{gammaepsilon}
    \L ^\e := (1 + \e) \p^2_{\o\o} + \omega \p_\omega  - \omega \p_\theta -\p_t.
\end{equation}
This strongly degenerate operator has a fundamental solution $\G^\e= \G^\e (\o, \theta, t; \o_0, \theta_0, t_0)$ which is smooth with respect to the variable $(\o, \theta, t)$ belonging to the set $\R^3 \setminus \{(\o_0, \theta_0, t_0)\}$. We introduce some notation in order to write the explicit expression of $\G^\e$. We first write $\L ^\e$ in the form
\begin{equation}
    \L ^\e := \sum_{i,j=1}^{2} a_{ij}\p^2_{x_ix_j} + \sum_{i,j=1}^{2} b_{ij} x_j \p_{x_i} -\p_t,
\end{equation}
where $x = (x_1, x_2) = (\o, \theta)$, and
\begin{equation}
    A_\e := \left(a_{ij}\right)_{i,j=1,2} = \begin{pmatrix} 1 + \e & 0 \\ 0 & 0 \end{pmatrix}, \quad
    B := \left(b_{ij}\right)_{i,j=1,2} = \begin{pmatrix} 1 & 0 \\ -1& 0 \end{pmatrix}.
\end{equation}
Following H\"ormander (see p. 148 in \cite{H}), we set, for every $t \in \R$, 
\begin{equation}\label{uno}
    E(t) := \exp \big(- t B \big), \qquad
    C_\e (t) := \int_0^t E(s) A_\e E^T(s) \, ds.
\end{equation}
A plain computation shows that
\begin{equation}\label{due}
	E(t)=\begin{pmatrix} \mathrm{e^{-t}}  \;\;\; &0\\ 1-\mathrm{e^{-t}} & 1 \end{pmatrix}, \qquad
	C_\e(s)=\frac{1 + \e}{2} \begin{pmatrix} 1-\mathrm{e}^{-2t} &1-2\mathrm{e}^{-t}+\mathrm{e}^{-2t} \\ 
	1-2\mathrm{e}^{-t}+\mathrm{e}^{-2t} & 2t - 3+4\mathrm{e}^{-t} - \mathrm{e}^{-2t} \end{pmatrix}.
\end{equation} 
Note that the matrix $C_\e(t)$ is symmetric and strictly positive for every $t > 0$, then it is invertible and the fundamental solution $\G^\e$ of $\L^\e u = 0$ is defined for every $(x,t) = (\o, \theta, t) \in \R^2 \times ]0, + \infty[$ as follows
\begin{equation} \label{eq-Gammaepsilon-0}
 \G^\e (x,t) = \frac{1}{4 \pi \sqrt{\text{det} \, C_\e(t)}} 
 \exp \left( - \tfrac{1}{4} \langle C_\e^{-1}(t) x, x \rangle - t \right).
\end{equation}
As customary in the setting of parabolic equation, we set $\G^\e (x,t) = 0$ whenever $t \le 0$. Finally, the fundamental solution with singularity at any point $(x_0,t_0) \in \R^3$ is defined as
\begin{equation} \label{eq-Gammaepsilon}
 \G^\e (x,t; x_0, t_0) := \G^\e(x - E(t-t_0) x_0, t-t_0).
\end{equation}
The following properties of $\G^\e$ will be useful in the sequel
\begin{equation}\label{unieps}
	\int_{\R^2}\G^\e(\o,\theta,t;\x,\y,\t)d\y d\x = 1,\qquad\forall (\o,\theta) \in \R^2, \tau < t.
\end{equation}
Moreover, a direct computation shows that the following identity holds true	
\begin{equation}\label{eq-Gammatilde}
	\int_{\R}\G^\e(\o,\theta,t;\x,\y,\t)d\y = \widetilde \G^\e(\o,t;\x,\t),\qquad\forall (\o,\theta,t, \x, \tau) \in \R^5, \quad \text{with} \ \tau < t.
\end{equation}
Here 
\begin{equation} \label{eq-Gammatilde-0}
 \widetilde \G^\e(\o,t;\x,\t) = \frac{1}{ \sqrt{(1+\e) \pi (1-e^{-2(t-\tau)})}} 
 \exp \left( - \tfrac{| \o - e^{-(t-\tau)}\xi|^2}{(1+\e)  (1-e^{-2(t-\tau)})} - (t-\tau) \right),
\end{equation}
is the fundamental solution to the operator $(1 + \e) \p^2_{\o\o} + \omega \p_\omega -\p_t$.

Let's turn our attention to the Cauchy problem \eqref{comune}
\begin{equation} \label{comune2}
\begin{dcases}
    \L_{\Omega,\tilde{\rho}} \ u = f,\;\;\;\;&(\o,\theta,t)\in\R^2 \times ]T_0,T_1[,\\
    u(\o,\theta,T_0)=u_0 (\o,\theta),\;\;&(\o,\theta) \in \R^2.
\end{dcases}
\end{equation}
The existence of a fundamental solution $\G_{\Omega,\tilde{\rho}}$ to the equation $\L_{\Omega,\tilde{\rho}} \, u = 0$ has been proved by Di Francesco and Pascucci in \cite{DP} via the Levi’s parametrix method. In order to state the main results of \cite{DP}, which include some bounds for $\G_{\Omega,\tilde{\rho}}$ that will be useful in the proof of Theorem \ref{Th-main}, we recall some further notation and known facts on degenerate Kolmogorov operators. 

The expression $(x - E(t-t_0) x_0, t-t_0)$ appearing in \eqref{eq-Gammaepsilon} is related to an invariance property of the differential operator $\L^\e$, which is needed to state the H\"older continuity assumption on the coefficient $\Phi_{\tilde{\rho}}$ appearing in the operator $\L_{\Omega,\tilde{\rho}}$. We first recall that the operator $\L^\e$ is invariant with respect to the change of variable $(\o, \theta, t) \mapsto (\o,\theta,t)\circ (\x,\y,\tau)$, where
\begin{equation}
\begin{aligned}  \label{transl}
  (\o,\theta,t)\circ (\x,\y,\tau) := &((\x,\y) + E(\tau)(\o,\theta) ,t+\tau) \\
  =&(\x+\o\mathrm{e}^{-\t},\y+\theta+\o(1-\mathrm{e}^{-\t}),t+\t),
  \qquad(\o,\theta,t),(\x,\y,\tau)\in \mathbb{R}^3.
\end{aligned}
\end{equation}
The set $\big( \R^{3}, \circ \big)$ is a Lie group with identity $(0,0,0)$, and the inverse of a point $(\o,\theta,t)\in \R^{3}$ is
\begin{equation}
  (\o,\theta,t)^{-1}=(-E(-t)(\o,\theta),-t)=(-\o\mathrm{e}^{t},-\theta-\o(1-\mathrm{e}^{t}),-t).
\end{equation}
Moreover, as usual in the regularity theory for subelliptic operators on Lie groups, an \emph{anisotropic} norm is used to define a quasi-metric. In this case we consider the norm
\begin{equation} \label{seminorm}
  \|(\o,\theta,t)\| = |\o|+|\theta|^{\frac{1}{3}}+|t|^{\frac{1}{2}},\qquad\forall (\o,\theta,t) \in \mathbb{R}^{3},
\end{equation}
and we define the quasi-distance as follows: for every $(\o,\theta,t), (\x,\y,\t) \in  \mathbb{R}^{3}$, we define
\begin{equation}\label{semidistance}
	\begin{aligned}
  d((\o,\theta,t),(\x,\y,\t))&=\|(\x,\y,\t)^{-1}\circ (\o,\theta,t)\|\\
  &=|\o-\x\mathrm{e}^{\t-t}|+|\theta-\y+\x(\mathrm{e}^{\t-t}-1)|^{\frac{1}{3}}+|t-\t|^{\frac{1}{2}}.
	\end{aligned}
\end{equation}
We are now in position to give the following
\begin{definition}[H\"older continuous function]\label{hold}
Let $\alpha \in ] 0,1]$, and let $U$ be an open subset of $\mathbb{R}^{3}$. We say that a function $f:U \longrightarrow \mathbb{R}$ is H\"older continuous with exponent $\alpha$ in $U$ (in short  $f \in C^\alpha(U)$) if there exists a positive constant $k$ such that
\begin{equation}
  |f(\o,\theta,t)-f(\x,\y,\t)|\leq k \, d((\o,\theta,t),(\x,\y,\t))^\alpha,\qquad 	\forall (\o,\theta,t),(\x,\y,\t) \in U.
\end{equation}
\end{definition}

We next recall Theorem 1.4 in \cite{DP}, which provides us with an existence result for a fundamental solution $\G_{\Omega,\tilde{\rho}}$ to $\L_{\Omega,\tilde{\rho}} u = 0$.

\begin{theorem}\label{cauchylow}
Let suppose that the  coefficient $\Phi_{\tilde{\rho}} (\o,\theta,t)$ of $\L_{\O, \tilde{\rho}}$  is bounded and such that there exists $\a\in ]0,1]$ and a positive constant $k$ such that
\begin{equation}
	|\Phi_{\tilde{\rho}} (\theta,\O,t)-\Phi_{\tilde{\rho}} (\tilde{\theta},\Omega,t)|\leq k \ d((\theta,\O,t),(\tilde{\theta},\O,t))^\alpha,\qquad 	\forall \theta,	\tilde{\theta} 	\in \R, (\O,t) \in \R^2.
\end{equation}  
Then there exists a fundamental solution $\G_{\Omega,\tilde{\rho}}$ to $\L_{\Omega,\tilde{\rho}}$ with the following properties:
\begin{enumerate}
  \item $\G_{\Omega,\tilde{\rho}}(\cdot;\x,\y,\t) \in \mathrm{L}_{\loc}^1(\R^{3} ) \cap C(\R^{3}\setminus \{(\x,\y,\t)\})$ for every $(\x,\y,\t) \in \R^{3}$;
  \item $\G_{\Omega,\tilde{\rho}}(\cdot;\x,\y,\t)$ is a classical solution to $\L_{\Omega,\tilde{\rho}} \ u =0$ in $\R^{3}\setminus \{(\x,\y,\t)\}$ for every $(\x,\y,\t) \in \R^{3}$;
  \item the following identity holds
  \begin{equation}\label{unitary}
  \int_{\R^2}\G_{\Omega,\tilde{\rho}}(\o,\theta,t;\x,\y,\t)d\y d\x = 
  \mathrm{e}^{-(t-\tau)},\qquad\forall (\o,\theta) \in \R^2, \tau < t.
  \end{equation}
  \item\label{sei} for every $\e>0$ and $T>0$  there exists a constant $\overline{C} = \overline{C} \big( \e, T,  \| \Phi_{\tilde{\rho}} \|_{L^\infty(\R^3)}\big) $, 
  such that 
\begin{equation}
	\begin{aligned}\label{estimatesderivate}
 0< \;\G_{\Omega,\tilde{\rho}}(\o,\theta,t;\x,\y,\t)&\leq \overline{C} \G^\e(\o,\theta,t;\x,\y,\t),
  \\|  \p\G_{\Omega,\tilde{\rho}}(\o,\theta,t;\x,\y,\t)| &\leq \frac{\overline{C}}{\sqrt{(t-\tau)^i}} \G^\e(\o,\theta,t;\x,\y,\t),
  \end{aligned} 
\end{equation}
where $\G^\e$ is the function defined in \eqref{eq-Gammaepsilon} $\p$ stands for $\p_{\o}$ ($i=1$), $\p^2_{\o\o}$ or Lie derivative $Y$ ($i=2$), for every  $(\o,\theta,t),(\x,\y,\t) \in \R^{3}$ with $0<t-\tau<T$.  
\end{enumerate}
Let's consider the following Cauchy problem
\begin{equation} \label{comune3}
	\begin{dcases}
 \L_{\Omega,\tilde{\rho}}u(\o,\theta,t) = f(\o,\theta,t),\qquad&(\o,\theta,t)\in\R^2 \times ]T_0,T_1[,\\
		u(\o,\theta,T_0)=u_0(\o,\theta),\qquad&(\o,\theta) \in \R^2,
	\end{dcases}
\end{equation}
where $u_0 \in C(\R^2)$ is such that
  \begin{equation}\label{hpg}
    |u_0(\o,\theta)|\leq  C_1\mathrm{e}^{C_1(\omega^2+\theta^2)},\qquad\forall (\o,\theta) \in \R^2,	
  \end{equation}
and $f \in C(\R^2\times (T_0,T_1))$ is such that
  \begin{equation}
    |f(\o,\theta,t)|\leq  \frac{C_1\mathrm{e}^{C_1(\omega^2+\theta^2)}}{\sqrt{t}},\qquad\forall (\o,\theta,t) \in \R^2\times (T_0,T_1),
  \end{equation}
for some positive constant $C_1$, and for any compact subset $M$ of $\R^2$ there exists a positive constant $C$, $\beta \in ]0,1[$ and $\delta >0$ such that
  \begin{equation}\label{est}
  |f(\o,\theta,t)-f(\x,\y,t)|\leq C \frac{d((\o,\theta,t),(\x,\y,t))^\beta}{t^{1-\delta}},\qquad\forall (\o,\theta),(\x,\y) \in M, t \in ]T_0,T_1[.	
  \end{equation}
Then there exists $T \in ]T_0,T_1[$, only depending on growth constant $C_1$, such that the function
  \begin{equation} \label{conv}
  	\begin{aligned}
    &u(\o,\theta,t)=\\
    &\int_{\R^2}\G_{\Omega,\tilde{\rho}}(\o,\theta,t;\x,\y,T_0)u_0(\x,\y)d\y d\x - \int_{T_0}^t\int_{\R^2}\G_{\Omega,\tilde{\rho}}(\o,\theta,t;\x,\y,\t)f(\x,\y,\tau)d\y d\x d\tau,
	\end{aligned}
  \end{equation}
is solution to the previous Cauchy problem in $ ]T_0,T[$.

Moreover if $u$ is a solution to the Cauchy problem with null $f$ and $g$, and verifies the following estimate 
  \begin{equation}
	|u(\o,\theta,t)|\leq  C\mathrm{e}^{C(\o^2+\theta^2)},\qquad\forall (\o,\theta,t) \in \R^2\times ]T_0,T_1[,	
\end{equation} 
for some $C>0$, then $u \equiv  0$.
\end{theorem}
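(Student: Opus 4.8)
The plan is to read Theorem \ref{cauchylow} as an instance of the Levi parametrix theory for degenerate Kolmogorov operators of Di Francesco and Pascucci, and to deduce the representation formula \eqref{conv} and the uniqueness assertion from the Gaussian estimates that theory provides. Since $\L_{\Omega,\tilde{\rho}}$ contains no derivative in $\O$, the whole argument is carried out with $\O$ frozen as a parameter, so one only works with the two-variable operator in \eqref{comune2}.

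\textbf{Items 1--4.} First I would check that $\L_{\Omega,\tilde{\rho}}$ meets the structural hypotheses of \cite{DP}: its principal part is the constant matrix $A=\mathrm{diag}(1,0)$, its drift is the linear field with matrix $B$, and the positivity of $C_\e(t)$ for $t>0$ recorded in \eqref{due} shows that the pair $(A,B)$ satisfies H\"ormander's rank condition, so the model operator $\p^2_{\o\o}+\o\p_\o-\o\p_\theta-\p_t$ is hypoelliptic and homogeneous of degree two for the dilations attached to the norm \eqref{seminorm}; the extra first-order coefficient $\Phi_{\tilde{\rho}}$ is bounded and, by hypothesis, H\"older continuous in the quasi-distance $d$, and the zero-order term is the constant $1$. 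Under exactly these assumptions Theorem 1.4 of \cite{DP} produces the fundamental solution $\G_{\Omega,\tilde{\rho}}$ with properties (1)--(2) and the upper and derivative bounds \eqref{estimatesderivate}; the comparison is made against $\G^\e$ with a \emph{strictly} enlarged diffusion $1+\e$ rather than $\e=0$, because the parametrix error produced by the merely H\"older coefficient $\Phi_{\tilde{\rho}}$ can be absorbed only into a Gaussian with a strictly larger covariance, and the powers $(t-\t)^{-i/2}$ are dictated by the scaling of $\G^\e$ in \eqref{eq-Gammaepsilon-0}. For the identity \eqref{unitary} (item 3) I would integrate $\L_{\Omega,\tilde{\rho}}\G_{\Omega,\tilde{\rho}}(\cdot;\x,\y,\t)=0$ over $(\o,\theta)\in\R^2$ --- which is legitimate by the Gaussian bound --- so that all second-order and drift contributions either vanish or combine, using crucially that $\Phi_{\tilde{\rho}}$ does not depend on $\o$; what is left is a first-order ordinary differential equation in $t$ for $m(t)=\int_{\R^2}\G_{\Omega,\tilde{\rho}}(\o,\theta,t;\x,\y,\t)\,d\y\,d\x$, and the exponential factor appears once one normalizes $m(t)$ as $t\to\t^+$ by means of the mass of $\G^\e$ in \eqref{unieps}. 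Alternatively, this identity is already part of Theorem 1.4 of \cite{DP}.

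\textbf{Representation formula.} With $u_0$ and $f$ of the prescribed exponential-quadratic growth and $f$ satisfying the intrinsic H\"older bound \eqref{est}, I would set $u$ equal to the right-hand side of \eqref{conv} and check three points. (i) Absolute convergence for $t-T_0<T$: replacing $\G_{\Omega,\tilde{\rho}}$ by $\overline{C}\,\G^\e$ via \eqref{estimatesderivate}, the kernel $\G^\e(\o,\theta,t;\x,\y,T_0)$ decays in $(\x,\y)$ on the anisotropic scales $\sqrt{t-T_0}$ and $(t-T_0)^{3/2}$, and this Gaussian decay dominates the growth $e^{C_1(\x^2+\y^2)}$ precisely when $t-T_0$ is below a threshold $T$ depending only on $C_1$; in the Duhamel term the singularity $1/\sqrt{t-\t}$ of $f$ is time-integrable. (ii) $u$ is a classical solution of $\L_{\Omega,\tilde{\rho}}u=f$: in the first term one differentiates under the integral using $\L_{\Omega,\tilde{\rho}}\G_{\Omega,\tilde{\rho}}=0$; in the Duhamel term one runs the classical potential-theoretic argument, where the second-order bound $(t-\t)^{-1}\G^\e$ of \eqref{estimatesderivate}, paired with the H\"older modulus of $f$ in \eqref{est}, yields an integrable kernel after the usual subtraction $f(\x,\y,\t)-f(\o,\theta,\t)$, and the boundary contribution of the $\t$-integration at $\t=t$ reproduces $f(\o,\theta,t)$. (iii) The initial datum is attained: as $t\to T_0^+$ the first term tends to $u_0(\o,\theta)$ because $\G_{\Omega,\tilde{\rho}}(\o,\theta,t;\cdot,\cdot,T_0)$ concentrates at $(\o,\theta)$ with total mass tending to $1$ by item 3 and $u_0$ is continuous, while the Duhamel term vanishes thanks to the extra integration in $\t$.

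\textbf{Uniqueness, and the hard part.} For uniqueness, let $u$ solve the homogeneous problem with null data and satisfy $|u|\le C e^{C(\o^2+\theta^2)}$. Writing Green's representation identity for $\L_{\Omega,\tilde{\rho}}$ against $\G_{\Omega,\tilde{\rho}}$ on a ball $B_R\subset\R^2$ and letting $R\to\infty$, the boundary terms contain $\G_{\Omega,\tilde{\rho}}$ and its first derivatives times $u$ and its first derivatives on $\p B_R$; by \eqref{estimatesderivate} they are controlled by $\G^\e$ times a polynomial times $e^{C(\o^2+\theta^2)}$, so for $t-T_0$ small (depending on $C$) the Gaussian decay forces them to $0$, giving $u(\o,\theta,t)=\int_{\R^2}\G_{\Omega,\tilde{\rho}}(\o,\theta,t;\x,\y,T_0)u(\x,\y,T_0)\,d\y\,d\x\equiv 0$ on a short time slab; iterating over consecutive slabs of length controlled by $C$ covers all of $]T_0,T_1[$. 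The hard part will be the differentiation of the Duhamel potential and the attainment of the initial datum in (ii)--(iii), together with the far-field control in the uniqueness argument: all three rest on the \emph{anisotropic} Gaussian and derivative bounds \eqref{estimatesderivate} and on the use of the intrinsic distance $d$ in the H\"older hypotheses, and the exponential-quadratic growth of $u$ is exactly what produces the short-time restriction. In the present approach all of this is supplied by \cite{DP}, so the only real adaptation is the observation that the $\O$-dependence of $\Phi_{\tilde{\rho}}$ is harmless because $\O$ is never differentiated.
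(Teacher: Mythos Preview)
Your proposal is correct and matches the paper's treatment: the paper does not prove Theorem~\ref{cauchylow} at all but simply records it as Theorem~1.4 of \cite{DP}, and your plan is precisely to invoke that result after checking that $\L_{\Omega,\tilde\rho}$ (with $\Omega$ frozen) fits its hypotheses. Your added sketches for item~(3), the representation formula, and uniqueness are reasonable elaborations, but none of them is carried out in the paper itself.
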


\section{Proof of Theorem \ref{Th-main}.}
In this Section we prove existence, uniqueness and regularity of classical solutions to the Cauchy problem \eqref{eq-Cauchy-pbm} in $S_T = \R^3 \times ]0,T[$, which we recall here for reader's convenience
\begin{equation}
	\begin{dcases} 
 \L_{\O,\r}\r=\frac{\p^2\rho}{\p\omega^2} + \frac{\p}{\p\omega} \left[(\omega + \Phi_\rho(\theta,\O,t))\rho\right] - \omega \frac{\p \rho}{\p\theta} -\frac{\p\rho}{\p t} =0,\qquad &\mathrm{in} \ S_T,\\
 \rho(\o,\theta,\Omega,0)=\rho_0(\o,\theta,\Omega),\qquad &\mathrm{in}	\ \R^3,
	\end{dcases}
\end{equation} 
where 
\begin{equation}
\Phi_\rho(\theta,\O,t) = - \O - K \int_\R \int_\R \int_0^{2\pi} 
g(\Omega')\sin(\theta'-\theta)\rho(\o',\theta',\Omega',t)d\theta'd\omega'd\Omega'.
\end{equation}

As said in the Introduction, we find a solution to the nonlinear problem \eqref{eq-Cauchy-pbm} as the fixed point of a map that associates to a given function $\rho$ the solution to the linearized problem \eqref{comune}, which is obtained by using the fundamental solution $\G_{\Omega,\tilde{\rho}}$. We list the assumptions about the initial datum $\rho_0$ we will adopt in this iterative argument.
\begin{enumerate}
  \item $\rho_0 \in C(\R^3)$;
  \item $\rho_0$ is strictly positive and verifies {\rm ({\bf E})} in Definition \ref{def-expdecay};
  \item $\rho_0$ is $2\pi$ periodic with respect to $\theta$;
  \item for every $\Omega \in \R$ we have \label{normal}
  \begin{equation}
    \int_{]0, 2\pi[ \times \R}\rho_0(\o,\theta,\Omega) d\theta d\omega= 1.
  \end{equation}
\end{enumerate} 
Moreover we recall that the natural frequency distributions of oscillators $g(\O)$ is a probability density, 
satisfying \eqref{hp-g}. 
We first prove a preliminary result. Here and in the sequel, we set $S_T^*:= \R^2 \times ]0,T[$, and we fix $\O \in \R$. Moreover, we consider the function $\G^\e$, defined in \eqref{eq-Gammaepsilon}, for a fixed $\e >0$, as we will rely on \eqref{estimatesderivate}.

\begin{lemma}\label{lemmam}
Consider for $T>0$ the Cauchy problem \eqref{comune} in $S_T$ with $f=0$. If $\tilde\rho$ is continuous and verifies property  {\rm ({\bf E})}, then the function $u$ defined as
\begin{equation}\label{repre}
  u(\o,\theta,\Omega,t) = \int_{\R^2}\G_{\Omega,{{\tilde\rho}}}(\o,\theta,t;\xi,\eta,0)
  \rho_0(\xi,\eta,\Omega)d\eta d\xi,
\end{equation}
is a classical solution $u$ to \eqref{comune} with $T_0 = 0, T_1 = T$, and $u_0 = \varrho_0$ . Moreover
\begin{enumerate}
  \item $u$ verifies property  {\rm ({\bf E})} with constants $\overline M, C_{\Omega,T}$. In addition we have
  \begin{equation}\label{stimay} 
  | \p u(\o,\theta, \Omega,t)| \leq \frac{C_{\Omega,T}}{\sqrt{t^i}} \mathrm{e}^{-\overline M \omega^2}, \quad \forall (\omega, \theta, t) \in S_T^*,
   \end{equation}
where $\p$ stands for $\p_\omega$ (and \eqref{stimay} holds with $i=1$), $\p^2_{\omega \omega}$ or Lie derivative $Y$ (and \eqref{stimay} holds with $i=2$); 
  \item $u$ is $2 \pi$ periodic with respect to $\theta$;
  \item  for every $\Omega \in \R$ and for every $t \in [0,T[$ we have
  \begin{equation}
 \int_{]0, 2\pi[ \times \R} u(\o,\theta,\Omega,t) d\theta d\omega= 1.
  \end{equation}
\item $u(\o,\theta,\Omega,t)>0$ for every $t\geq 0$.
\end{enumerate}
\end{lemma}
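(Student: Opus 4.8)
The plan is to verify each of the four claimed properties of the candidate function $u$ defined by the potential \eqref{repre}, relying on Theorem~\ref{cauchylow} as the main tool. The first step is to check the hypotheses of Theorem~\ref{cauchylow} for the linearized operator $\L_{\Omega,\tilde\rho}$: we must show that $\Phi_{\tilde\rho}$ is bounded and H\"older continuous in $\theta$ (uniformly in $t$). Boundedness follows immediately from $\int_\R g = 1$, the bound $|\sin| \le 1$, and property {\rm ({\bf E})} for $\tilde\rho$, which makes $\int_{\R^2}\tilde\rho(\o',\theta',\Omega',t)\,d\o'$ finite and uniformly controlled; H\"older (indeed Lipschitz) continuity in $\theta$ follows because $\theta \mapsto \sin(\theta'-\theta)$ is Lipschitz with constant $1$ and the remaining integral is uniformly bounded. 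Hence $\G_{\Omega,\tilde\rho}$ exists with all the properties (1)--(4) of Theorem~\ref{cauchylow}, in particular the Gaussian-type upper bounds \eqref{estimatesderivate} in terms of $\G^\e$, and the representation formula \eqref{conv} shows that $u$ in \eqref{repre} is a classical solution of \eqref{comune} with $f=0$, $T_0=0$, $u_0=\rho_0$ (here one also needs $\rho_0$ to satisfy the growth bound \eqref{hpg}, which is trivially implied by {\rm ({\bf E})}).

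Next I would prove property~1, the exponential decay of $u$ and its derivatives. The idea is to insert the pointwise bound $\G_{\Omega,\tilde\rho} \le \overline C\,\G^\e$ (and the analogous bounds with $1/\sqrt{t}$, $1/t$ for the derivatives and the Lie derivative $Y$) into \eqref{repre}, together with $\rho_0(\xi,\eta,\Omega) \le C e^{-M\xi^2}$ from {\rm ({\bf E})}. One is then reduced to estimating $\int_{\R^2}\G^\e(\o,\theta,t;\xi,\eta,0)\,e^{-M\xi^2}\,d\eta\,d\xi$. Using \eqref{eq-Gammatilde} to integrate out $\eta$ reduces this to a one-dimensional Gaussian integral against $\widetilde\G^\e(\o,t;\xi,0)$, and a standard completion-of-the-square computation gives a bound of the form $C_{\Omega,T}\,e^{-\overline M \o^2}$ with $\overline M$ depending only on $M$, $\e$, $T$ (and not on $\theta$), which is exactly {\rm ({\bf E})}; multiplying by the extra time-singular factors yields \eqref{stimay}. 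The dependence of the constant on $\Omega$ enters only through $\|\Phi_{\tilde\rho}\|_{L^\infty}$, which depends on $\Omega$ linearly through the term $-\Omega$.

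For property~2, $2\pi$-periodicity in $\theta$: I would argue that the operator $\L_{\Omega,\tilde\rho}$ has coefficients that are $2\pi$-periodic in $\theta$ (the only $\theta$-dependence is through $\Phi_{\tilde\rho}(\theta,\Omega,t)$, which is $2\pi$-periodic because $\tilde\rho$ is — assuming $\tilde\rho$ is taken $2\pi$-periodic in its iteration role), and that the translation $(\omega,\theta,t)\mapsto(\omega,\theta+2\pi,t)$ is a symmetry of the group law \eqref{transl} in the appropriate sense, so $\G_{\Omega,\tilde\rho}$ is invariant under simultaneously shifting $\theta$ and $\eta$ by $2\pi$. Then the substitution $\eta \mapsto \eta + 2\pi$ in \eqref{repre}, together with the $2\pi$-periodicity of $\rho_0$, gives $u(\omega,\theta+2\pi,\Omega,t) = u(\omega,\theta,\Omega,t)$. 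Alternatively, and more robustly, one invokes the uniqueness statement at the end of Theorem~\ref{cauchylow}: both $u(\omega,\theta,\Omega,t)$ and $u(\omega,\theta+2\pi,\Omega,t)$ solve the same Cauchy problem with the same initial datum and both satisfy the Gaussian growth bound, hence they coincide.

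For property~3, the normalization $\int_{]0,2\pi[\times\R} u\,d\theta\,d\omega = 1$: the natural approach is to integrate the equation $\L_{\Omega,\tilde\rho}u = 0$ over $]0,2\pi[\times\R$ in $(\theta,\omega)$ and obtain an ODE in $t$ for $N(t) := \int u\,d\theta\,d\omega$. The term $\partial^2_{\omega\omega}u$ and $\partial_\omega[\cdots]$ integrate to zero in $\omega$ by the decay of $u$ and $\partial_\omega u$ from property~1; the term $-\omega\partial_\theta u$ integrates to zero in $\theta$ by periodicity; the term $+u$ (coming from the $+1$ in $\L_{\Omega,\tilde\rho}$, which in the original equation arises from the $\partial_\omega[\omega\rho]$ differentiation) contributes $+N(t)$, and $-\partial_t u$ contributes $-N'(t)$. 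Wait — I need to be careful: one should instead integrate the divergence-form equation \eqref{eq-Cauchy-pbm} itself, where $\partial_\omega[(\omega+\Phi)\rho]$ integrates to zero and $-\omega\partial_\theta\rho$ integrates to zero, leaving $N'(t) = 0$, hence $N(t) = N(0) = 1$. The justification of the interchange of differentiation and integration, and the vanishing of boundary terms at $\omega = \pm\infty$, uses property~1. Finally, property~4, strict positivity: this follows from the strict positivity of $\G_{\Omega,\tilde\rho}$ (part~(4) of Theorem~\ref{cauchylow}, $\G_{\Omega,\tilde\rho} > 0$) and the strict positivity of $\rho_0$, since \eqref{repre} is then an integral of a strictly positive integrand.

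The main obstacle I anticipate is property~2 (periodicity) done honestly via the invariance of $\G_{\Omega,\tilde\rho}$: the fundamental solution from the Levi parametrix method in \cite{DP} is not given by an explicit formula, so one must track how the parametrix construction interacts with the $\theta$-translation symmetry — this is where invoking the uniqueness part of Theorem~\ref{cauchylow} is the clean shortcut. A secondary technical point is making the Gaussian integral estimate in property~1 uniform in $\theta$ and giving the precise $t$-dependence ($1/\sqrt t$, $1/t$) of the constants, which requires carefully combining \eqref{estimatesderivate} with \eqref{eq-Gammatilde-0}; this is routine but must be written with care so that $\overline M$ does not degenerate as $t \to 0^+$ or $t \to T$.
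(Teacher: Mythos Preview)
Your proposal is correct and follows essentially the same route as the paper: verify the boundedness and Lipschitz continuity of $\Phi_{\tilde\rho}$ to invoke Theorem~\ref{cauchylow}, then deduce property~(1) from the Gaussian bounds \eqref{estimatesderivate} combined with the decay of $\rho_0$, property~(2) from the uniqueness statement in Theorem~\ref{cauchylow}, property~(3) by integrating the divergence-form equation and killing boundary terms via periodicity and decay, and property~(4) from positivity of $\G_{\Omega,\tilde\rho}$ and $\rho_0$. The only technical difference is in the one-dimensional Gaussian estimate for property~(1): you propose a direct completion of the square in $\int_\R \widetilde\G^\e(\omega,t;\xi,0)\,e^{-M\xi^2}\,d\xi$, whereas the paper instead splits the integral into the regions $\{|\xi|\ge|\omega|/2\}$ and $\{|\xi|<|\omega|/2\}$ to extract the factor $e^{-\overline M\omega^2}$; your approach is slightly cleaner and yields the same uniform-in-$t$ conclusion.
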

\begin{proof} 
As a direct consequence of the property {\rm ({\bf E})}, we can easily see that the function $\Phi_{\tilde\rho}$ defined in \eqref{eq-Phi-comune} is bounded. Moreover, if we consider $\Phi_{\tilde\rho}$ as a function of the variable $(\omega, \theta,\O,t)$ by letting $\Phi_{\tilde\rho}(\omega, \theta,\O,t) := \Phi_{\tilde\rho}(\theta,\O,t)$, there exists a positive constant $L$ such that 
\begin{equation} \label{eq-Phi-Lip}
  |\Phi_{\tilde\rho}(\omega_1, \theta_1,\O,t) - \Phi_{\tilde\rho}(\omega_2, \theta_2,\Omega,t)| 
  \leq L\; \mathrm{d}((\omega_1, \theta_1,\O,t),(\omega_2, \theta_2,\O,t)),
\end{equation} 
for every $(\omega_1, \theta_1,\O,t), (\omega_2, \theta_2,\O,t) \in S_T$. Therefore the function $u$ defined in \eqref{repre} 
is a classical solution $u$ to \eqref{comune}, by Theorem \ref{cauchylow}. We next prove that $u$ has the properties listed in the statement of the Lemma.

\medskip
In order to prove that $u$ and all the derivatives $\p u$ listed in the point \emph{(.1)} verify the property {\rm ({\bf E})}, we first observe that the identity \eqref{repre} can be differentiated under the integral sign, \emph{i.e.}
\begin{equation}\label{derivatero}
  \p u(\o,\theta,\Omega,t) = 
  \int_{\R^2} \p \G_{\Omega,{{\tilde\rho}}}(\o,\theta,t;\xi,\eta,0)\rho_0(\xi,\eta,\Omega)d\eta d\xi.
\end{equation}
Then by  point \emph{(.4)} of Theorem \ref{cauchylow} we have
\begin{equation}
 | \p u(\o,\theta,\Omega,t)| \leq \frac{\overline{C}_{\O,T}}{\sqrt{t^i}} \int_{\R^2} \G^\e(\o,\theta,t;\xi,\eta,0)\rho_0(\xi,\eta,\Omega)d\eta d\xi,
\end{equation}
for some $\overline{C}_{\O,T}>0$ that depend on  $\e, T,  \| \Phi_{\tilde{\rho}\|_{L^\infty(\R^3)}}$, and therefore on $\O$. 	

Property {\rm ({\bf E})} follows from \eqref{estimatesderivate} and from the exponential  decay of the initial datum. Consider a given $\omega \in \R$, by \eqref{unieps} we have
\begin{equation}\label{firstupper}
|\p u(\o,\theta, \Omega,t)| \leq \frac{\overline{C}_{\O,T}C}{\sqrt{t^i}}\int_{\R^2}
\G^\e(\o,\theta,t;\x,\y,0)\mathrm{e}^{-M\x^2}d\x d\y .
\end{equation}
If $|\omega| > 1$, we first recall the equation \eqref{eq-Gammatilde-0} where $\widetilde{\G}^\e$ has been defined, then we split the above integral in two parts
\begin{equation}
\begin{split}
    \int_{\R^2} & \G^\e(\o,\theta,t;\x,\y,0)\mathrm{e}^{-M\x^2}d\x d\y = 
    \int_{- \infty}^{+ \infty} \widetilde{\G}^\e (\o,t;\x,0) \mathrm{e}^{-M\x^2} d\x \\
    & \qquad = \int_{\big\{|\x| \geq \frac{|\o |}{2}\big\}} \widetilde{\G}^\e (\o,t;\x,0) \mathrm{e}^{-M\x^2} d\x 
    + \int_{\big\{|\x|<\frac{|\o|}{2}\big\}} \widetilde{\G}^\e (\o,t;\x,0) \mathrm{e}^{-M\x^2} d\x.
\end{split}
\end{equation}
Concerning the first integral, we immediately have 
\begin{equation}\label{firstupper-1}
 \int_{\big\{|\x| \geq \frac{|\o |}{2}\big\}} \widetilde{\G}^\e (\o,t;\x,0) \mathrm{e}^{-M\x^2}d\x \le 
 \mathrm{e}^{-M\frac{\o^2}{4}} \int_{- \infty}^{+ \infty} \widetilde{\G}^\e (\o,t;\x,0) d\x 
 = \mathrm{e}^{-M\frac{\o^2}{4}}.
\end{equation}
We use the change of variable $x=\frac{\mathrm{e}^{-t}\x-\o}{\sqrt{(1+\e)(1-e^{-2t})}}$ in the second integral. Note that, if $|\x|<\frac{|\o|}{2}$, then $|x|>\frac{|\o|}{2 \sqrt{(1+\e)(1-e^{-2t})}}$, so that
\begin{equation}
\begin{split}
    \int_{\big\{|\x|<\frac{|\o|}{2}\big\}} & \widetilde{\G}^\e (\o,t;\x,0) \mathrm{e}^{-M\x^2} d\x \le \\
	\le & \int_{\big\{|x|>\frac{|\o|}{2 \sqrt{(1+\e)(1-e^{-2t})}}\big\}} \frac{\mathrm{e}^{-x^2}}{\sqrt{\pi}}
	\mathrm{e}^{-M\mathrm{e}^{2t}\left(x\sqrt{M(1+\e)(1-e^{-2t})}+\o\right)^2}dx \le \\
	\le & \tfrac{\mathrm{e}^{-\frac{\o^2}{4(1+\e)(1-e^{-2t})}}}{M\sqrt{ (1+\e)(1-e^{-2t})}} <
	\tfrac{\mathrm{e}^{-\frac{1}{8(1+\e)(1-e^{-2t})}}}{M\sqrt{ (1+\e)(1-e^{-2t})}} \cdot
	\mathrm{e}^{-\frac{\o^2}{8(1+\e)}} \le \tilde C_{\e,T} \cdot \mathrm{e}^{-\frac{\o^2}{8(1+\e)}},
\end{split}
\end{equation}
where we have used the assumption that $|\omega| > 1$ and the fact that the function appearing in the last line of the above display is bounded in its domain $\R^+$. Then \eqref{stimay} in the case $|\omega| > 1$ follows from the above inequality and \eqref{firstupper-1}, if we set
\begin{equation} \label{eq-COmegaT}
 \overline M = \min \big\{ \tfrac{M}{4}, \tfrac {1}{8(1+\e)} \big\}.
\end{equation}
If $|\omega| \le 1$ by \eqref{firstupper} we have
\begin{equation}
	|\p u(\o,\theta, \Omega,t)| \leq \frac{\overline{C}_{\O,T} \, C \, \mathrm{e}^{\overline{M}}}{\sqrt{t^i}}\mathrm{e}^{-\overline{M}\o^2},
\end{equation} 
then the thesis follows if we set
\begin{equation}\label{expr-c}
	 C_{\O,T} :=  \overline{C}_{\O,T} \, C  \max \{2, 2 \, \tilde C_{\e,T},\mathrm{e}^{\overline{M}}\}.
\end{equation}
The proof of the periodicity stated in \emph{(.2)} follows from a standard uniqueness argument. We omit the details.

In order to prove the assertion \emph{(.3)} of the Lemma, we first notice that the continuity of the functions $\p_\o u$ and $Y u = \o\p_\o u -\o\p_\theta u -\p_t u$ implies the continuity of the Lie derivative $\tilde Y  u := Y u - \o \p_\o u = -\o\p_\theta u -\p_t u$. Let us define for every $R>0$ and $0<t_0<t_1$ the sets
\begin{equation}
	\begin{aligned}
		&C_R=\left\{(\o,\theta,t)\in \R^3:-R\leq\o\leq R,\;0\leq \theta \leq 2\pi, \;t_0\leq t\leq t_1\right\},\\
		&B_R=\left\{(\o,\theta)\in \R^2:-R\leq\o\leq R,\;0\leq \theta \leq 2\pi \right\}.\\
	\end{aligned}
\end{equation}
If we integrate in $C_R$ the equation in \eqref{comune}  we obtain
\begin{equation}
  \int_{C_R}-\tilde Y  u  d\omega d\theta dt = \int_{C_R} \p_{\omega}(\p_{\omega}u + (\omega 
  + \Phi_{\tilde\rho}(\theta,\Omega,t))u) d\omega d\theta dt.
\end{equation}
Then by the divergence theorem we have
\begin{equation}
	\begin{aligned}
	&\int_{B_R}u(\o,\theta,\O,t_1)d\o d\theta-\int_{B_R}u(\o,\theta,\O,t_0)d\o d\theta+\int_{t_0}^{t_1}\int_{-R}^{R} \o u(\o,0,\O,t)d\o dt-\\ \\
	&-\int_{t_0}^{t_1}\int_{-R}^{R} \o u(\o,2\pi,\O,t)d\o dt=\int_{C_R} \p_{\omega}(\p_{\omega}u + (\omega 
	+ \Phi_{\tilde\rho}(\theta,\Omega,t) ) u) d\omega d\theta dt.
	\end{aligned}
\end{equation}
The last two integrals in the left-hand side sum to zero by periodicity, moreover by the exponential decay in $\omega$ we have
\begin{equation}
	\begin{aligned}
		&\lim_{R\to \infty}\int_{C_R} \p_{\omega}(\p_{\omega}u 
		+ (\omega + \Phi_{\tilde\rho}(\theta,\Omega,t))u) d\omega d\theta dt=0.
	\end{aligned}
\end{equation}
Then for every $0 \le t_0<t_1$ we have
\begin{equation}
	\int_{]0,2\pi[\times \R}u(\o,\theta,\O,t_0)d\o d\theta=\int_{]0,2\pi[\times \R}u(\o,\theta,\O,t_1)d\o d\theta.
\end{equation}
The thesis then follows from the property of $\rho_0$ by choosing $t_0=0$.

The proof of the point \emph{(.4)} is a consequence of the representation formula \eqref{repre}, since both $\G_{\Omega,{\tilde\rho}}$ and $\rho_0$ are positive functions.
\end{proof}

\medskip
We are now ready to prove an existence result for the Cauchy problem \eqref{eq-Cauchy-pbm}. 

\begin{proposition}\label{result}
For every $\Omega, T \in \R$, with $T>0$ and $\rho_0$ satisfying the property {\rm ({\bf E})}, there exists a classical solution $\rho = \rho(\o,\theta,\Omega,t)$ to \eqref{eq-Cauchy-pbm} that verifies property {\rm ({\bf E})} in the set $S_T^*$. Moreover $\rho$ is unique and can be represented by the fundamental solution $\G_{\Omega,{{\rho}}}$ of $\L_{\Omega,{{\rho}}}$ as follows
\begin{equation}\label{eq-repr-rho}
  \rho(\o,\theta,\Omega,t) = \int_{\R^2}\G_{\Omega,{{\rho}}}(\o,\theta,t;\xi,\eta,0)
  \rho_0(\xi,\eta,\Omega)d\eta d\xi.
\end{equation}
\end{proposition}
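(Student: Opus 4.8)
The plan is to set up a fixed-point argument on a suitable complete metric space of functions and invoke the Banach contraction principle, exactly as announced in the Introduction. First I would fix $\Omega \in \R$ and $T>0$, and introduce the set $X$ of continuous functions $\sigma : S_T \to \R$ that are $2\pi$-periodic in $\theta$, strictly positive, satisfy $\int_{]0,2\pi[\times\R}\sigma(\o,\theta,\Omega,t)\,d\theta\,d\o = 1$ for all $t,\Omega$, and obey the exponential decay property {\rm (\textbf{E})} with a fixed pair of constants $\overline M, C_{\Omega,T}$ (the ones produced by Lemma \ref{lemmam}, which depend only on the decay constants $M,C$ of $\rho_0$, on $\e$, and on $T$ through the bound on $\|\Phi_{\tilde\rho}\|_\infty$). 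On $X$ I would use the metric $\mathrm{d}_X(\sigma_1,\sigma_2) = \sup_{S_T}\mathrm{e}^{\overline M \o^2}|\sigma_1 - \sigma_2|$, possibly with a time-weight $\mathrm{e}^{-\lambda t}$ for large $\lambda$ to force contractivity; with this metric $X$ is a complete metric space. To each $\tilde\rho \in X$ associate $\Psi(\tilde\rho) = u$, the solution of the linear problem \eqref{comune} with $f=0$, $u_0 = \rho_0$, given by the representation formula \eqref{repre}. By Lemma \ref{lemmam}, $\Psi$ maps $X$ into $X$: $u$ is a classical solution, is $2\pi$-periodic, positive, mass-preserving, and has property {\rm (\textbf{E})} with the prescribed constants.

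The core of the argument is the contraction estimate. Given $\tilde\rho_1, \tilde\rho_2 \in X$, write $u_i = \Psi(\tilde\rho_i)$ and $\Phi_i = \Phi_{\tilde\rho_i}$. The difference $w = u_1 - u_2$ solves $\L_{\Omega,\tilde\rho_1} w = (\Phi_2 - \Phi_1)\,\p_\o u_2$ with zero initial datum, so by the representation formula for the linear Cauchy problem (the Duhamel term in \eqref{conv}),
\begin{equation}
  w(\o,\theta,t) = -\int_0^t\!\!\int_{\R^2}\G_{\Omega,\tilde\rho_1}(\o,\theta,t;\x,\y,\t)\,\big(\Phi_2 - \Phi_1\big)(\y,\Omega,\t)\,\p_\o u_2(\x,\y,\Omega,\t)\,d\y\,d\x\,d\t.
\end{equation}
From \eqref{eq-Phi-comune} and $|\sin(\theta'-\theta)|\le 1$ together with the decay constants, one gets the pointwise bound $|\Phi_2 - \Phi_1|(\y,\Omega,\t) \le K\,c_g\,\mathrm{d}_X(\tilde\rho_1,\tilde\rho_2)$, where $c_g = \int_\R g(\Omega')\,d\Omega' \int_{]0,2\pi[\times\R}\mathrm{e}^{-\overline M (\o')^2}d\o'\,d\theta'$ is a finite constant depending only on $g$ and $\overline M$ (here assumption \eqref{hp-g}, or merely integrability of $g$, suffices). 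For $\p_\o u_2$ I would use the bound \eqref{stimay} from Lemma \ref{lemmam} with $i=1$, namely $|\p_\o u_2(\x,\y,\Omega,\t)| \le C_{\Omega,T}\,t^{-1/2}\mathrm{e}^{-\overline M \x^2}$, and for $\G_{\Omega,\tilde\rho_1}$ the Gaussian upper bound in \eqref{estimatesderivate} together with \eqref{unieps} (the argument splitting the integral as in the proof of Lemma \ref{lemmam} reproduces the factor $\mathrm{e}^{-\overline M \o^2}$). Putting these together yields
\begin{equation}
  \mathrm{e}^{\overline M \o^2}|w(\o,\theta,t)| \le K\,c_g\,C_{\Omega,T}\,\overline C\,\mathrm{d}_X(\tilde\rho_1,\tilde\rho_2)\int_0^t \frac{d\t}{\sqrt{\t}} = 2K\,c_g\,C_{\Omega,T}\,\overline C\,\sqrt t\;\mathrm{d}_X(\tilde\rho_1,\tilde\rho_2),
\end{equation}
so that on a possibly shorter time interval $]0,T_*[$, with $T_*$ depending only on $K$, $g$, and the structural constants, $\Psi$ is a contraction. (If one prefers to contract on the full $]0,T[$, insert the weight $\mathrm{e}^{-\lambda t}$ and choose $\lambda$ large, since $\int_0^t \mathrm{e}^{\lambda(\t - t)}\t^{-1/2}d\t \to 0$ as $\lambda\to\infty$ uniformly in $t$.) The unique fixed point $\rho \in X$ is then a classical solution of \eqref{eq-Cauchy-pbm} satisfying {\rm (\textbf{E})} and the representation \eqref{eq-repr-rho}, and extending from $]0,T_*[$ to $]0,T[$ is done by iterating the construction on successive time slabs, using $\rho(\cdot,\cdot,\cdot,T_*)$ as new initial datum (which again satisfies the hypotheses, by Lemma \ref{lemmam}).

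Uniqueness within the class of solutions with property {\rm (\textbf{E})}: if $\rho$ and $\rho'$ are two such solutions, both satisfy linear problems $\L_{\Omega,\rho}\rho = 0$ and $\L_{\Omega,\rho'}\rho' = 0$; setting $v = \rho - \rho'$ one gets $\L_{\Omega,\rho}v = (\Phi_{\rho'} - \Phi_\rho)\p_\o\rho'$ with zero initial datum, and the same Duhamel estimate as above gives $\sup_{S_t}\mathrm{e}^{\overline M\o^2}|v| \le C\sqrt t\,\sup_{S_t}\mathrm{e}^{\overline M \o^2}|v|$ on a short interval, forcing $v\equiv 0$ there; a continuation argument over $[0,T]$ finishes it. Alternatively one can quote the uniqueness clause of Theorem \ref{cauchylow} once the two solutions are known to share the same coefficient.

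\textbf{Main obstacle.} The delicate point is not the abstract fixed-point scheme but making the contraction estimate uniform in the spatial variables: one must track the exponential weight $\mathrm{e}^{-\overline M\o^2}$ through the convolution with $\G_{\Omega,\tilde\rho_1}$, which is precisely the computation carried out (for $u$ rather than $w$) in the proof of Lemma \ref{lemmam} — splitting $\{|\x|\ge |\o|/2\}$ from $\{|\x|<|\o|/2\}$ and using \eqref{eq-Gammatilde-0}. One also has to check that the constants $\overline M, C_{\Omega,T}$ in the definition of $X$ are genuinely independent of $\tilde\rho \in X$, which is true because the constant $\overline C$ in \eqref{estimatesderivate} depends on $\tilde\rho$ only through $\|\Phi_{\tilde\rho}\|_{L^\infty}$, and the latter is bounded by $|\Omega| + K c_g$ uniformly over $X$. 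A secondary (minor) point is the Hölder continuity of $\Phi_{\tilde\rho}$ in $\theta$ required to apply Theorem \ref{cauchylow}; this follows because $\theta\mapsto\sin(\theta'-\theta)$ is Lipschitz and the $\o'$-integral of $\tilde\rho$ is finite by {\rm (\textbf{E})}, exactly as noted in \eqref{eq-Phi-Lip}.
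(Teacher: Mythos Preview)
Your overall strategy---Picard/Banach fixed point with exponential weights, Lipschitz control on $\Phi$, Duhamel for the difference---matches the paper's, which runs the explicit iteration $\rho_n$ and proves it is Cauchy via the recursive inequalities \eqref{disuguaglianzeq} on short time slabs. The substantive gap in your plan is the treatment of the variable $\Omega$. Since $\Phi_{\tilde\rho}$ integrates $\tilde\rho(\cdot,\cdot,\Omega',\cdot)$ over \emph{all} $\Omega'\in\R$, you cannot ``fix $\Omega$'' and close the fixed-point map on functions of $(\omega,\theta,t)$ alone; the map must act on functions of $(\omega,\theta,\Omega,t)$. But the decay constant $C_{\Omega,T}$ produced by Lemma \ref{lemmam} genuinely grows in $|\Omega|$ (see the explicit expression \eqref{explicit-c}, coming from the parametrix construction in \cite{DP}), so your proposed metric $\sup_{S_T} e^{\overline M\omega^2}|\sigma_1-\sigma_2|$ is infinite on $X$, and the assertion that ``merely integrability of $g$ suffices'' for the bound on $|\Phi_2-\Phi_1|$ is incorrect. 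The paper's remedy is to build $C_{\Omega,T}$ into the weight (the norm \eqref{diff-rho_n} carries the factor $e^{M_k\omega^2}/C_{\Omega,T}$), after which the Lipschitz bound becomes \eqref{diff-phi-rho_n} and produces the constant $G=\int_\R g(\Omega)C_{\Omega,T}\,d\Omega$; finiteness of $G$ is precisely where hypothesis \eqref{hp-g} with $\beta>2$ enters.

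Two smaller points you pass over. First, invoking the Duhamel representation for $w$ requires the forcing $(\Phi_2-\Phi_1)\partial_\omega u_2$ to satisfy the local H\"older condition \eqref{est}; this needs a H\"older estimate on $\partial_\omega u_2$ with the correct blow-up at $t=0$, which the paper establishes in \eqref{eq-secondbountfn} via an additional bound on $\partial_\omega\Gamma$ taken from \cite{DPa}. Second, convolving $e^{-\overline M\xi^2}$ with $\Gamma^\e$ does not ``reproduce'' the weight $e^{-\overline M\omega^2}$ but strictly degrades the exponent (the computation in Lemma \ref{lemmam} already shows this). The paper handles this by introducing a decreasing sequence $M_k\ge A$ tied to the time slabs (see \eqref{eq-mk}) rather than a single $\overline M$; your restart-on-slabs extension would need the same refinement, since with the naive choice the decay exponent shrinks geometrically and the slab length $T_*$ (which depends on it through $c_g$) would collapse before reaching $T$.
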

\begin{proof}
We define a sequence of function $\big\{ \rho_n \big\}_{n \in \N \cup \{ 0\}}$ as follows. We set $\rho_0(\o,\theta,\Omega,t)=\rho_0(\o,\theta,\Omega)$ for every $(\o,\theta,t) \in S_T^*$. Then, for every $n \in \mathbb{N}$, we let 
\begin{equation}
  \Phi_{\rho_{n-1}}(\theta,\O,t)= - \O - K \int_\R \int_\R \int_0^{2\pi} 
  g(\Omega')\sin(\theta'-\theta)\rho_{n-1}(\o',\theta',\Omega',t)d\theta'd\omega'd\Omega',
\end{equation}
and we define $\rho_n$ as the solution to
\begin{equation}
	\begin{dcases} \label{probappr}
    \L_{\O,\r_{n-1}}\r_n=\p^2_{\omega\omega}\rho_n + \p_\omega [(\omega + \Phi_{\rho_{n-1}}(\theta,\O,t))\rho_n] - \omega \p_\theta \rho_n -\p_t \rho_n=0, \\
    \rho_n (\o,\theta,\Omega,0)=\rho_0(\o,\theta,\Omega).
	\end{dcases}
\end{equation}
The solution to the non-linear Cauchy problem will be defined as the limit of the sequence $\big\{ \rho_n \big\}_{n \in \N \cup \{ 0\}}$. We rely on a compactness argument to prove the convergence of a subsequence of $\big\{ \rho_n \big\}_{n \in \N \cup \{ 0\}}$. However, due the presence of the term $\rho_{n-1}$ in the coefficient $\Phi_{\rho_{n-1}}$ in \eqref{probappr}, we need to show that the whole sequence converges.

\medskip

We now prove that the sequence $\big\{ \rho_n \big\}_{n \in \N \cup \{ 0\}}$ has the property {\rm ({\bf E})}, with constants that does not depend on $n$. As $\rho_0 \in C(\R^3)$ and verifies property {\rm ({\bf E})}, we can apply Lemma \ref{lemmam}, therefore for every $n \in \N$ there exists a classical solution $\r_n$ in $\R^2\times [0,T[$ such that:
\begin{itemize}
  \item $\rho_n$ is $2 \pi$ periodic in $\theta$; 
  \item $\rho_n$ is strictly positive for every $t\geq 0$;
  \item $\rho_n$ is normalized 
  \begin{equation}\label{rho-normalized}
  \int_{]0,2\pi[\times\R} \rho_n(\o,\theta,\Omega,t) d\theta d\omega = 1,\qquad \forall\; t \in [0,T[.
  \end{equation}
\end{itemize}

We recall that $\G_{\Omega,\r_{n-1}}$ denotes the fundamental solution of the operator $\L_{\O,\r_{n-1}}$ appearing in \eqref{probappr}. The assertion \emph{(.4)} of Theorem \ref{cauchylow} yields
\begin{equation}
	\begin{aligned}\label{stimechemiservono}
   \G_{\Omega,\r_{n-1}}(\o,\theta,t;\xi,\eta,0)&\leq \overline{C}_{\O,T} \G^\e(\o,\theta,t;\xi,\eta,0),\\ 
  |  \p\G_{\Omega,\r_{n-1}}(\o,\theta,t;\xi,\eta,0)| &\leq \frac{\overline{C}_{\O,T}}{\sqrt{t^i}} \G^\e(\o,\theta,t;\xi,\eta,0),
	\end{aligned} 
\end{equation}
where the constant $\overline{C}_{\O,T}>0$ only depends on $\e, T$, and on $\| \Phi_{{\rho_{n-1}}} \|_{L^\infty(S_T^*)}$. Moreover, also using \eqref{eq-Phi-Lip} and \eqref{rho-normalized}, we see that
\begin{equation} \label{eq-bound-Phi}
\begin{split}
 \| \Phi_{{\rho_{n-1}}} \|_{L^\infty(S_T^*)} &  \le |\Omega| + K, \\
 |\Phi_{{\rho_{n-1}}} (\o_1,\theta_1,\Omega,t)- \Phi_{{\rho_{n-1}}}(\o_2,\theta_2,\Omega,t)| &  \leq 
 K \, \mathrm{d}((\o_1,\theta_1,\Omega,t),(\o_2,\theta_2,\Omega,t)),
\end{split}
\end{equation}
for every $(\o_1,\theta_1,\Omega,t),(\o_2,\theta_2,\Omega,t) \in S_T$, and for every $n \in \N$. Note that, in particular, the constant $\overline{C}_{\O,T}$ in \eqref{stimechemiservono} does not depend on $n$. In the sequel, we will use the fact that, from the boundedness and the Lipschitz continuity of the function $\Phi_{{\rho_{n-1}}}$, the $\beta$-H\"older continuity of $\Phi_{{\rho_{n-1}}}$ directly follows for every $\beta \in ]0,1[$. 

As a consequence of Lemma \ref{lemmam}, the sequence $\{\rho_n\}_{n \in \mathbb{N}}$ has an uniform exponential decay in $\omega$, that is
\begin{equation}\label{rhosup}
 0 < \rho_n(\o,\theta,t) \leq  C_{\O,T} \, \mathrm{e}^{- \overline M \omega^2}, 
\qquad \forall (\o,\theta,t) \in \R^2 \times ]0,T[, \quad n \in \N.
\end{equation}
Moreover, $\p^2_\omega \rho_n$, and $Y \rho_n$ have an uniform exponential decay in $\omega$ as well, with a constant  $C_{\O,T}/{t}$. 
Thus, the sequence $\{\rho_n\}_{n \in \mathbb{N}}$ is equi-bounded and equi-continuous on every compact subset of $\R^2 \times ]0,T]$. Hence, there exists a subsequence $\{\rho_{n_k}\}_{k \in \mathbb{N}}$ and a function $\rho \in C(S^*_T)$ such that 
\begin{equation}
 \rho_{n_k} \rightarrow \rho, \quad \text{as} \quad {k \to + \infty},
\end{equation}
uniformly on every compact subset of $\R^2 \times ]0,T]$.

As said above, we need to show that the whole sequence $\{\rho_n\}_{n \in \mathbb{N}}$ converges to $\rho$. We will prove a stronger result, namely 
\begin{equation} \label{eq-rho_n-to-rho}
 \| \rho_{n} - \rho \|_{L^\infty (S_T^*)} \rightarrow 0, 
 \quad \text{as} \quad {n \to + \infty},
\end{equation}
where ${S_T^*} = \R^2 \times ]0,T[$. Aiming at proving \eqref{eq-rho_n-to-rho}, we preliminarily note that the constant $\overline{C}_{\O,T}$ in \eqref{stimechemiservono} depends on the quantities $\O$ and $T$ as follows
\begin{equation}\label{explicit-c}
	\overline{C}_{\O,T} = C_\e+\frac{\sqrt{T}}{2}\sum_{j=1}^{+\infty} (|\O|+K)^j(1+\sqrt{T})^j\frac{\sqrt{\pi}}{\G_E(\frac{j}{2})},
\end{equation}
where $C_\e>0$  depends only on $\e$, which is fixed, and $\G_E$ denotes the Euler Gamma function. We obtain the above expression by repeating the computations made in \cite{DP} in the particular case of the operator $\L$.
Then, by recalling the definition \eqref{expr-c} of $C_{\O,T}$, the hypothesis \eqref{hp-g} yields
\begin{equation}\label{G}
	G:=\int_{\R} g(\O){C}_{\O,T} \, d\O < +\infty.
\end{equation} 
Let us now fix a positive $t_0$, which needs to be chosen small enough to have  
\begin{equation}
  q := 2 K \pi^{\frac{3}{2}}\overline{C}_{\O,T}G\sqrt{ \tfrac{t_0}{A}} < 1,
\end{equation}
where we have set
\begin{equation}
	 A := \frac{\overline{M}}{1+2(1+\e)\overline{M}(\mathrm{e}^{2T}-1)}.
\end{equation}
Here $\overline M$ is the constants in the exponential decay. Then, we define $\kk=\lfloor T/t_0\rfloor +1$ and we set $T_k=k t_0$ for $k=0,...,\kk-1$, and $T_\kk = T$. For every $n \in \N$ we consider the function 
\begin{equation}
v_n(\o,\theta,\Omega ,t) := \rho_{n+1} (\o,\theta,\Omega, t) - \rho_n (\omega,\theta, \Omega, t).
\end{equation}
It is the unique bounded solution to the following Cauchy problem
\begin{equation}\label{cahu}
  \begin{cases} 
   \L_{\O, \rho_{n}} v_n = (\Phi_{\rho_{n}}-\Phi_{\rho_{n-1}})\p_\omega \rho_n,\quad
   \mathrm{in}\; \R^2 \times ]T_k,T_{k+1}[, \\
   v_n(\o,\theta,\Omega, T_k) = \rho_{n+1} (\o,\theta,\Omega, T_k) - \rho_n (\o,\theta,\Omega, T_k).
  \end{cases}
\end{equation}
Now we represent the function $v_n$ by using the fundamental solution $\G_{\O, \rho_{n}}$ of $\L_{\O, \rho_{n}}$. The above PDE can be written as $\L_{\O, \rho_{n}} v_n = f_n$, with 
\begin{equation}\label{nonul}
   f_n (\o,\theta,\Omega,t) := (\Phi_{\rho_{n}}(\theta,\Omega,t)-
   \Phi_{\rho_{n-1}}(\theta,\Omega,t))\p_\omega \rho_n(\o,\theta,\Omega,t).
\end{equation} 
In order to apply Theorem \ref{cauchylow}, we claim that there exist three positive constants $C_1, \gamma$ and $\beta$, with $\beta, \gamma <1$, such that
\begin{equation} \label{eq-bounds-fn}
\begin{split}
  |f_n(\o,\theta,\Omega,t)| & \leq \frac{C_1}{\sqrt{t}}, \\
  |f_n(\o_1,\theta_1,\Omega,t)-f_n(\o_2,\theta_2,\Omega,t)| & \leq 
 \frac{C_1}{t^{1-\gamma}}\; \mathrm{d}((\o_1,\theta_1,\Omega,t),(\o_2,\theta_2,\Omega,t))^\beta ,
\end{split}
 \end{equation}
for every $(\o,\theta,\Omega,t) (\o_1,\theta_1,\Omega,t),(\o_2,\theta_2,\Omega,t) \in S_T$, and for every $n \in \N$.
The first inequality in \eqref{eq-bounds-fn} directly follows from the first inequality in \eqref{eq-bound-Phi} and from the uniform exponential decay of $\p_\omega \rho_n$. In order to prove the second inequality, we claim that there exist two positive constants $C$ and $\beta$, with $\beta <1$, such that
\begin{equation} \label{eq-secondbountfn}
	  |\p_\o\rho_n(\o_1,\theta_1,\Omega,t)-\p_\o\rho_n(\o_2,\theta_2,\Omega,t)| \leq 
	\frac{C}{t^{\frac{1+\beta}{2}}}\; \mathrm{d}((\o_1,\theta_1,\Omega,t),(\o_2,\theta_2,\Omega,t))^\beta,
\end{equation}
for every $ (\o_1,\theta_1,\Omega,t),(\o_2,\theta_2,\Omega,t) \in S_T$, and for every $n \in \N$. 
The conclusion of the proof of \eqref{eq-bound-Phi} plainly follows from the exponential decay of $\p_\omega \rho_n$ combined with \eqref{eq-bound-Phi} and \eqref{eq-secondbountfn}. 

The estimate \eqref{eq-secondbountfn} is a direct consequence of the identity
\begin{equation}
	\p \rho_n(\o,\theta,\Omega,t) = 
	\int_{\R^2} \p_\o \G_{\Omega,{{\rho_n}}}(\o,\theta,t;\xi,\eta,0)\rho_0(\xi,\eta,\Omega)d\eta d\xi.
\end{equation}
Then we use the following property the fundamental solution, proved in Theorem 3.2 of \cite{DPa}: for every $\beta \in ]0,1[$ there exists a positive constant $c_\beta$ such that
\begin{equation}\begin{aligned}
\big|\p_\omega \G_{\O, \rho_{n}}(\o_1,\theta_1,t;\x,\eta, 0)- 	
\p_\omega \G_{\O, \rho_{n}} & (\o_2,\theta_2,t;\x,\eta, 0) \big| \leq\\& 
c_\beta \frac{\mathrm{d}((\o_1,\theta_1,t),(\omega_2,\theta_2,t))^\beta}{t^{\frac{1+\beta}{2}}}
\G^\e(\o_1,\theta_1,t;\x,\eta, 0),
\end{aligned}\end{equation}
for every $t \in ]0,T[, (\o_1,\theta_1),(\o_2,\theta_2),(\x,\y)\in \R^2$, being $\G^\e$ the function defined in \eqref{gammaepsilon}. Finally, by using the exponential decay of the initial datum and \eqref{unieps} we find
\begin{equation}\begin{aligned}
	 \big| \p_\o\rho_n(\o_1,\theta_1,\Omega,t)-\p_\o\rho_n (\o_2,\theta_2,\Omega,t) \big| \leq & \\ c_\beta \frac{\mathrm{d}((\o_1,\theta_1,t),(\omega_2,\theta_2,t))^\beta}{t^{\frac{1+\beta}{2}}} & 
	 \int_{\R^2}\G^\e(\o_1,\theta_1,t;\x,\eta, 0)\rho_0(\x,\y)d\x d\y\leq  \\ 
	 & \qquad \qquad \qquad \qquad \overline c_\beta 
	 \frac{\mathrm{d}((\o_1,\theta_1,t),(\omega_2,\theta_2,t))^\beta}{t^{\frac{1+\beta}{2}}},
\end{aligned}\end{equation}
for some $\overline{c}_\b>0$. This concludes the proof of \eqref{eq-bounds-fn} with $\beta$ arbitrarily chosen in $]0,1[$ and $\gamma = \frac{1-\beta}{2}$.

Once the bounds \eqref{eq-bounds-fn} have been proved, we represent $v_n$ as follows
\begin{equation}
\begin{aligned}\label{represent}
 v_n(\o,\theta,\Omega ,t) = 
 \int_{\R^2} \G_{\Omega,{{\rho_n}}} & (\o,\theta,t;\xi,\eta,T_k) v_n ( \x,\eta, \Omega, T_k) \, d\eta d\xi \\
 & -\int_{T_k}^t \int_{\R^2} \G_{\Omega,{{\rho_n}}} (\o,\theta,t;\x,\eta,\tau)
 f_{n}(\xi, \eta, \O, \tau) \, d\eta  d\xi d\tau.
\end{aligned}
\end{equation}

In order to simplify the remaining part of the proof, we define recursively the constants 
\begin{equation}
	M_1:= \frac{\overline{M}}{1+\overline{M}(1+\e)(\mathrm{e}^{2T_1}-1)}, \quad M_{k+1}:= \frac{M_k}{1+M_k(1+\e)(\mathrm{e}^{2(T_{k+1}-T_k)}-1)},\quad k=1,...,\kk-1,
\end{equation}
and we note that, by a plain induction argument, we find
\begin{equation}
M_{k}=\frac{\overline{M}}{1+\overline{M} (1+\e)k(\mathrm{e}^{2t_0}-1)},\quad M_{\hat{k}}=\frac{\overline{M}}{1+\overline{M} (1+\e)((\hat{k}-1)(\mathrm{e}^{2t_0}-1)+(\mathrm{e}^{2(T-T_{\hat{k}-1})}-1))}.
\end{equation}
In particular, the inequalities below hold
\begin{equation}\label{eq-mk}
	\overline{M}>M_k>M_{k+1} \ge A, \qquad k=1,...,\kk-1,
\end{equation}  
since
\begin{equation}
(\hat{k}-1)(\mathrm{e}^{2t_0}-1)+(\mathrm{e}^{2(T-T_{\hat{k}-1})}-1)\leq \hat{k}(\mathrm{e}^{2t_0}-1)\leq \left(\frac{T}{t_0}+1\right)(\mathrm{e}^{2t_0}-1)\leq 2(\mathrm{e}^{2T}-1).
\end{equation}
We now introduce the weighted norms
\begin{equation}\label{diff-rho_n}
 \|\rho_{n+1}-\rho_n\|_k := \sup_{S^*_T \cap ]T_{k-1},T_k]} | \rho_{n+1} (\o,\theta,\Omega, t) - \rho_n (\omega,\theta, \Omega, t)| \frac{\mathrm{e}^{M_{k}\omega^2}}{{C}_{\O,T}},
 \qquad k=1,...,\kk.
\end{equation}
From \eqref{eq-mk} and \eqref{rhosup} it follows that 
\begin{equation}
 \|\rho_{n+1}-\rho_n\|_k \le 2, \qquad k=1,...,\kk.
\end{equation}
The term $\frac{\mathrm{e}^{M_{k}\omega^2}}{{C}_{\O,T}}$ appearing in the norm introduced in \eqref{diff-rho_n} is useful in order to deal with the term $f_n$ in \eqref{represent}. Indeed, later on we will rely on the following bound	
\begin{equation} \label{diff-phi-rho_n}
	\begin{aligned}
		&|\Phi_{\rho_{n}}(\theta,t)- \Phi_{\rho_{n-1}}(\theta,t)| \leq 
		\\&\leq K \int_\R g(\Omega) \int_\R \int_0^{2\pi} |\sin(\theta-\theta')(\rho_{n} (\omega',\theta', \Omega, t) - \rho_{n-1} (\omega',\theta',  \Omega, t))| d\theta' d\omega' d\Omega \leq 
		\\& \leq K \|\rho_{n}-\rho_{n-1}\|_{k+1}  \int_{\R}\int_0^{2\pi} \mathrm{e}^{-{M_{k+1}}\omega'^2}d\theta' d\omega' \int_\R g(\Omega) {C}_{\O,T}d\O= \frac{2 \pi^{\frac{3}{2}} K G }{\sqrt{M_{k+1}}} \|\rho_{n}-\rho_{n-1}\|_{k+1},
	\end{aligned}
\end{equation} 
for every $\theta \in \R$ and $ t \in ]T_k,T_{k+1}[$, which will be used to estimate $|f_n|$.

In order to prove \eqref{eq-rho_n-to-rho} we claim that the following estimates
\begin{equation}\label{disuguaglianzeq}
 \begin{aligned}
 \|\rho_{n+1}-\rho_n\|_1\leq & \, q \, \|\rho_{n}-\rho_{n-1}\|_1, \\
 \|\rho_{n+1}-\rho_n\|_{k+1} \leq & \, q \, \|\rho_{n}-\rho_{n-1}\|_{k+1} +\overline{C}_{\O,T} \|\rho_{n+1}-\rho_n\|_k,   
 \end{aligned} 
\end{equation}
hold for all $n=2,3,..$ and $k=1,2,...,\kk-1$. Let's denote by $I_1$ and $I_2$ the two integral in the right hand side of \eqref{represent}. By \eqref{eq-Gammatilde-0} and \eqref{estimatesderivate} a direct computation give us
\begin{equation}\begin{aligned}
	|I_1|\leq& \overline{C}_{\O,T}\|\rho_{{n+1}}-\rho_n\|_k \, {C}_{\O,T}\int_{\R} \widetilde \G^\e(\o,t;\x,T_k)\mathrm{e}^{-M_k\xi^2}d\xi \leq\\ \leq & \overline{C}_{\O,T}\|\rho_{{n+1}}-\rho_n\|_k\, {C}_{\O,T}\;\mathrm{exp}\left(-\o^2\frac{M_k}{1+M_k(1+\e)(\mathrm{e}^{2(T_{k+1}-T_k)}-1)}\right)= \\=&\overline{C}_{\O,T}\|\rho_{{n+1}}-\rho_n\|_k \,  {C}_{\O,T}\mathrm{e}^{-M_{k+1}\o^2}.
	\end{aligned}
\end{equation}
In order to estimate $I_2$, we use again \eqref{eq-Gammatilde-0} and \eqref{estimatesderivate}, together with \eqref{diff-phi-rho_n} and the exponential decay of $\p_\o \rho_n$ 
\begin{equation}
	\begin{aligned}
  |I_2| & \leq\frac{2 \pi^{\frac{3}{2}} K  G\overline{C}_{\O,T}}{\sqrt{M_{k+1}}}\|\rho_{n}-\rho_{n-1}\|_{k+1} {C}_{\O,T} \int _{T_k}^t \int_{\R}\frac{\widetilde \G^\e(\o,t;\x,\t)\mathrm{e}^{-\overline{M}\xi^2} }{\sqrt{\tau}}  d\xi d\tau \leq \\ &\leq 2\pi^{\frac{3}{2}} K G\overline{C}_{\O,T}\sqrt{ \frac{t_0}{A}}\|\rho_{n}-\rho_{n-1}\|_{k+1}{C}_{\O,T}\mathrm{e}^{-M_{k+1}\o^2} = q \|\rho_{n}-\rho_{n-1}\|_{k+1}{C}_{\O,T}\mathrm{e}^{-M_{k+1}\o^2},
  	\end{aligned}
\end{equation}
where we have used the inequalities in \eqref{eq-mk}. This concludes the proof of \eqref{disuguaglianzeq}.

We next recall that Lemma 3.4 in \cite{Sp1} states that \eqref{disuguaglianzeq} imply the inequalities \begin{equation}\label{sommaq}
	\|\rho_{n+1}-\rho_{n}\|_k \leq q^{n-2} \sum_{i=0}^{k-1}(\overline{C}_{\O,T}(n-2))^i\|\rho_{3}-\rho_{2}\|_{k-i},
\end{equation}
for all $ n=3,4,...$ and $k=1,...,\kk$. We then find
\begin{equation}
  \|\rho_{n+1}-\rho_{n}\|_{C(\overline{S^*_T})}\leq\sum_{k=1}^{\kk}\|\rho_{n+1}-\rho_{n}\|_k \leq\sum_{k=1}^{\kk}q^{n-2}\sum_{i=0}^{k-1}(\overline{C}_{\O,T}(n-2))^i\|\rho_{3}-\rho_2\|_{k-i}.
\end{equation}
By using the elementary inequality $(\overline{C}_{\O,T}(n-2))^i \le (n-2)^\kk (1+\overline{C}_{\O,T})^{\kk}$ and
\begin{equation}
  \sum_{k=1}^{\kk}\sum_{i=0}^{k-1} \|\rho_{3}-\rho_{2}\|_{k-i} = \sum_{k=1}^{\kk}\sum_{j=1}^k \|\rho_3-\rho_2\|_j \leq \sum_{k=1}^{\kk}\sum_{j=1}^{\kk} \|\rho_3-\rho_2\|_j =\kk \sum_{j=1}^{\kk} \|\rho_3-\rho_2\|_j,	
\end{equation}
we finally have
\begin{equation}
  \|\rho_{n+1}-\rho_{n}\|_{C(\overline{S^*_T})}\leq q^{n-2} (n-2)^\kk (1+\overline{C}_{\O,T})^{\kk}\kk\sum_{j=1}^\kk\|\rho_{3}-\rho_2\|_{j}.
\end{equation}
Then, if we set
\begin{equation}
  V  := (1+ \overline{C}_{\O,T})^{\kk}\kk\sum_{j=1}^\kk\|\rho_{3}-\rho_2\|_{j},
\end{equation}
and we notice that $V$ does not depend on $n$, we eventually obtain
\begin{equation}
    \sum_{n=3}^{+\infty}\|\rho_{n+1}-\rho_{n}\|_{C(\overline{S^*_T})}\leq V \sum_{n=3}^{+\infty}  q^{n-2}(n-2)^\kk = V \sum_{n=1}^{+\infty} q^nn^\kk < \infty.
\end{equation}
This accomplishes the proof of \eqref{eq-rho_n-to-rho}. 

From \eqref{eq-rho_n-to-rho} and \eqref{rhosup} it follows that $\rho$ satisfies the identity \eqref{eq-repr-rho}, it has property {\rm ({\bf E})} in the set $S_T^*$, and 
\begin{equation}
  \rho(\o,\theta,\Omega,0)=\rho_0(\o,\theta,\Omega),\qquad 
  \mathrm{for \;every}  \ (\o,\theta,\Omega) \in \R^3.
\end{equation}
Moreover, $\r$ is a classical solution to $\L_{\O,\r}\r = 0$ in $S_T^*$, thanks to the bounds \eqref{stimay} in Lemma \ref{lemmam}, that hold for $\rho_n$, uniformly with respect to $n$. This completes the proof of Proposition \ref{result}.
\end{proof}

\medskip

We now prove that, under the additional assumption that $g$ has compact support,
there is only one solution satisfying the property {\rm ({\bf E})}.

\begin{proposition}\label{Prop-unique}
Let us suppose that $g$ has support in the set $[-\O_1,\O_1]$, and let $\rho_1$ and $\rho_2$ be two classical solutions to the Cauchy problem \eqref{eq-Cauchy-pbm}. If $\rho_1$ and $\rho_2$ satisfy property {\rm ({\bf E})}, then $\rho_1 =\rho_2$.
\end{proposition}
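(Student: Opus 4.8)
The plan is to exploit the representation formula together with the contraction argument already set up in the proof of Proposition \ref{result}. Since $\rho_1$ and $\rho_2$ are classical solutions to \eqref{eq-Cauchy-pbm} satisfying property {\rm ({\bf E})}, each $\rho_i$ may be written via its own fundamental solution as $\rho_i(\o,\theta,\Omega,t) = \int_{\R^2}\G_{\Omega,\rho_i}(\o,\theta,t;\xi,\eta,0)\rho_0(\xi,\eta,\Omega)d\eta d\xi$, exactly as in \eqref{eq-repr-rho}; this follows from the uniqueness assertion at the end of Theorem \ref{cauchylow} applied to the linear problem with coefficient $\Phi_{\rho_i}$. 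First I would set $w := \rho_1 - \rho_2$ and observe that $w$ solves the linear Cauchy problem $\L_{\O,\rho_1} w = (\Phi_{\rho_2} - \Phi_{\rho_1})\p_\omega \rho_2$ in $\R^2 \times ]0,T[$ with $w(\cdot,\cdot,\Omega,0) = 0$, which is the same structure as \eqref{cahu}. Representing $w$ by the fundamental solution $\G_{\O,\rho_1}$ gives $w(\o,\theta,\Omega,t) = -\int_0^t\int_{\R^2}\G_{\Omega,\rho_1}(\o,\theta,t;\xi,\eta,\tau)(\Phi_{\rho_2}-\Phi_{\rho_1})(\eta,\Omega,\tau)\p_\omega\rho_2(\xi,\eta,\Omega,\tau)d\eta d\xi d\tau$.

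Next I would run the same weighted-norm estimate used to prove \eqref{disuguaglianzeq}. Using that $g$ has compact support in $[-\O_1,\O_1]$, the constant $G$ in \eqref{G} is finite and in fact the relevant sup of $C_{\O,T}$ over $\O \in [-\O_1,\O_1]$ is finite; this is exactly where compact support of $g$ is needed, since it lets us control $\Phi_{\rho_i}$ and all the Gaussian-type constants uniformly in $\Omega$. Introducing the same weights $\mathrm{e}^{M_k \omega^2}/C_{\O,T}$ on the time slabs $]T_{k-1},T_k]$ and bounding $|\Phi_{\rho_2}-\Phi_{\rho_1}|$ by $\tfrac{2\pi^{3/2}KG}{\sqrt{M_{k+1}}}\|\rho_2-\rho_1\|_{k+1}$ as in \eqref{diff-phi-rho_n}, together with the bound on $\p_\omega\rho_2$ and \eqref{estimatesderivate}–\eqref{eq-Gammatilde-0}, I obtain for a suitably small first time-step $t_0$ (so that $q<1$) the slab-by-slab inequalities
\begin{equation}
\|\rho_2-\rho_1\|_1 \leq q\,\|\rho_2-\rho_1\|_1, \qquad
\|\rho_2-\rho_1\|_{k+1} \leq q\,\|\rho_2-\rho_1\|_{k+1} + \overline{C}_{\O,T}\|\rho_2-\rho_1\|_k,
\end{equation}
for $k=1,\dots,\kk-1$. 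Since $q<1$, the first inequality forces $\|\rho_2-\rho_1\|_1 = 0$, and then the second one propagates this inductively: $\|\rho_2-\rho_1\|_{k+1}(1-q) \leq \overline{C}_{\O,T}\|\rho_2-\rho_1\|_k = 0$, hence $\|\rho_2-\rho_1\|_{k} = 0$ for every $k$, i.e. $\rho_1 \equiv \rho_2$ on $S_T^* = \R^2\times]0,T[$ for every $\Omega$, and by continuity up to $t=0$. Since $T>0$ is arbitrary, this gives $\rho_1 = \rho_2$ globally.

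The main obstacle is not the algebra — which duplicates the estimates already carried out for \eqref{disuguaglianzeq} — but two more delicate points. First, one must justify that each $\rho_i$ genuinely admits the representation \eqref{eq-repr-rho} with its own fundamental solution $\G_{\Omega,\rho_i}$; this requires checking that $\Phi_{\rho_i}$ inherits from property {\rm ({\bf E})} the boundedness and Hölder continuity needed to invoke Theorem \ref{cauchylow}, and that the difference $\rho_i - \int \G_{\Omega,\rho_i}(\cdots)\rho_0$ is a solution of the homogeneous linear problem with growth controlled as in \eqref{hpg}, so that the uniqueness clause of Theorem \ref{cauchylow} applies and forces it to vanish. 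Second, one must verify that $t_0$, hence the whole slab decomposition, can be chosen uniformly for all $\Omega$ in the support of $g$; the explicit dependence \eqref{explicit-c} of $\overline{C}_{\O,T}$ on $\Omega$ together with $|\Phi_{\rho_i}|\le \O_1 + K$ makes this possible precisely because the support is bounded, which is why the statement restricts to compactly supported $g$.
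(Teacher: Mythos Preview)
Your argument is correct, but the paper takes a somewhat different route. Where you write $w=\rho_1-\rho_2$ as a solution to $\L_{\O,\rho_1}w=(\Phi_{\rho_2}-\Phi_{\rho_1})\p_\o\rho_2$ and represent $w$ via the \emph{variable-coefficient} fundamental solution $\G_{\O,\rho_1}$, the paper instead subtracts off the full drift to reduce to the \emph{constant-coefficient} operator $\overline{\L}=\p^2_{\o\o}+\o\p_\o-\o\p_\theta+1-\p_t$, at the price of a second source term $\Phi_{\rho_1}\p_\o(\rho_2-\rho_1)$, which is then handled by integrating by parts and using the derivative bound on $\overline{\G}$. Correspondingly, instead of the weighted $L^\infty$ norms $\|\cdot\|_k$ from Proposition \ref{result}, the paper works with an $L^1$-in-$\o$ quantity $\phi(t)=\sup_{0\le\t\le t}\int_\R \max_{\theta,\,\O\in[-\O_1,\O_1]}|\overline\rho/C_{\O,T}|\,d\o$ and derives a single inequality $\phi(t)\le c\sqrt{t}\,\phi(t)$, forcing $\phi\equiv0$ on a short interval and then iterating.

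Your approach is arguably cleaner: by keeping $\Phi_{\rho_1}$ inside the operator you have only one source term and can recycle verbatim the slab estimates \eqref{disuguaglianzeq}. The cost is that you must first establish the representation $\rho_i=\int\G_{\O,\rho_i}\rho_0$ (hence, via Lemma \ref{lemmam}, the decay of $\p_\o\rho_2$ with constant $C_{\O,T}/\sqrt t$), and you correctly flag this as the first delicate point. The paper's choice buys an explicit, solution-independent fundamental solution $\overline{\G}$, so that no preliminary representation of the $\rho_i$ is needed; the decay of $\p_\o\rho_2$ still enters through the $I_1$ term, however, so the gain is modest. Both arguments use the compact support of $g$ in the same way---to make the $\O$-dependent constants uniform over $[-\O_1,\O_1]$---exactly as you identify in your second delicate point.
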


\begin{proof}
Suppose that $\rho_1$ and $\rho_2$ are two solutions to the Cauchy problem \eqref{eq-Cauchy-pbm}, both verifying property  {\rm ({\bf E})}, with constants $\overline{M},C_{\O,T}$. We define $\overline{\rho}:=\rho_1 - \rho_2$, and we note that the following quantities are finite for every $\t, t \in [0,T[$  
\begin{equation}\begin{aligned}\label{def-phi-tilde}
		\tilde\phi(\t):=\int_\R\max_{\theta \in \R,\O\in [-\O_1,\O_1]}\left|\frac{\overline{\rho}(\o,\theta,\O,\t)}{C_{\O,T}}\right|d\o,\qquad \phi(t):=\sup_{0\le \t\leq t}\tilde \phi(\t).
\end{aligned}\end{equation}
Moreover, $\overline{\rho}$ is a classical solution to
\begin{equation}
	\begin{dcases}
  \p^2_{\omega \omega} \overline{\rho}+ \omega \p_\omega \overline{\rho} - \omega \p_\theta \overline{\rho} +  \overline{\rho} - \p_t \overline{\rho}=- \left((\Phi_{\rho_2}-\Phi_{\rho_1}) \p_\omega \rho_2 - \Phi_{\rho_1}\p_\omega (\rho_2-\rho_1)\right), \\
		\overline \rho(\o,\theta,\Omega,0)=0.
	\end{dcases}
\end{equation} 
Arguing as in the proof of Theorem \ref{Th-main}, we see that $\overline{\rho}$ can be represented as
\begin{equation}\label{rep-diff}
	\overline{\rho}(\o,\theta,\Omega,t) = \int_0^t\int_{\R^2}\overline{\G}(\o,\theta,t;\xi,\eta,\t)\left((\Phi_{\rho_2}-\Phi_{\rho_1}) \p_\xi \rho_2 - \Phi_{\rho_1}\p_\xi (\rho_2-\rho_1)\right)(\xi,\y,\O,\t)d\xi d\y d\t,
\end{equation}
where $\overline{\G}$ is the fundamental solution of \begin{equation}
	\overline{\L} =  \p^2_{\omega \omega}+ \omega \p_\omega  - \omega \p_\theta  +  1 - \p_t.
\end{equation}
Our aim is to obtain an estimate for $\phi(t)$, using the representation formula \eqref{rep-diff}, which will implies $\phi(t_0)=0$ for a sufficiently small $t_0$. Moreover, $t_0$ only depends on the operator $\overline{\L}$ and on the quantities appearing in property  {\rm ({\bf E})}, then the argument can be repeated in the interval $[t_0, 2 t_0]$, then in $[2t_0, 3 t_0]$, and so on, hence we conclude that $\overline{\rho}(\o,\theta,\Omega,t) = 0$ for every $(\o,\theta,\Omega,t) \in \R^3 \times [0,T[$.

For this purpose, we first recall the constant $G$ introduced in \eqref{G}, and we note that the following estimate
\begin{equation} \label{}
	\begin{aligned}
		|\Phi_{\rho_{2}}(\y,\t)-& \Phi_{\rho_{1}}(\y,\t)| \leq 
		\\& K \int_\R g(\Omega) \int_\R \int_0^{2\pi} |\sin(\y-\theta')(\rho_{2} (\omega',\theta', \Omega, \t) -
		 \rho_{1} (\omega',\theta',  \Omega, \t))| d\theta' d\omega' d\Omega \leq \\ &\qquad \qquad \qquad \qquad \qquad \qquad \qquad \qquad\qquad \qquad\leq 2\pi K G\phi(t),\qquad \y \in \R ,\t \in [0,t],
	\end{aligned}
\end{equation} 
plainly follows from the definition of $\Phi_{\rho_{1}}$ and $\Phi_{\rho_{2}}$.
Then, from this bound, the exponential decay property of the derivative $\p_\o \rho_2$ and the bound \eqref{eq-bound-Phi} applied to $\Phi_{\rho_1}$ we find that
\begin{equation}\begin{aligned}
	\int_\R|\overline{\rho}(\o,\theta,&\Omega,t)|d\o\leq\\& 2\pi K G \phi(t) \int_0^t\int_{\R^3}\overline{\G}(\o,\theta,t;\xi,\eta,\t) \frac{C_{\O,T}}{\sqrt{\t}}\mathrm{e}^{-{\overline{M}\xi\,^2}}d\y d\o  d\xi  d\t+\\&(|\Omega| + K)\int_0^t\int_{\R^3}|\p_\xi \overline{\G}(\o,\theta,t;\xi,\eta,\t)| (\rho_2-\rho_1)(\xi,\y,\O,\t)d\w d\y d\xi  d\t=I_1+I_2.
\end{aligned}\end{equation}
By a direct computation we find that
\begin{equation}
	I_1 \le C_{\O,T}\;\frac{\pi^{\frac{3}{2}} K G }{\sqrt{\overline{M}}}\sqrt{t} \phi(t),
\end{equation}
Moreover, by \eqref{estimatesderivate}, together with definition \eqref{def-phi-tilde} and bounds \eqref{expr-c}, \eqref{explicit-c}, we obtain
\begin{equation}
	I_2 \le C_{\O,T}\overline{C}\frac{\sqrt{t}}{2}\phi(t),
\end{equation}
for some positive constant $\overline{C}$ that depends only on $\O_1$. Thus, since the previous estimates gives us 
\begin{equation}\label{upper-phi}
	\phi(t)\leq \phi(t)\left(\frac{\pi^{\frac{3}{2}} K G }{\sqrt{\overline{M}}}\sqrt{t} + \overline{C}\frac{\sqrt{t}}{2}\right),
\end{equation}
we can state that there exists $t_0$ such that, if $T\leq t_0$, we have $\phi(t)\leq c\,\phi(t)$ for some $c<1$, and therefore $\phi(t)=0$.

Finally, by iterating this arguments in time intervals $[kt_0,(k+1)t_0]$ the thesis follows.
\end{proof}

\medskip\medskip
\begin{proof}{\sc of Theorem \ref{Th-main}}
The proof of the Theorem follows from the results we have established in Lemma \ref{lemmam}, Proposition \ref{result} and Proposition \ref{Prop-unique}. The continuity dependence of the solution with respect to $\O$ is proved in \cite{DPa}.
\end{proof}

	\section{Finite difference method}

The purpose of this Section is to develop a numerical scheme in the spirit of the analytical framework introduced in Section 2. We propose therefore a finite difference scheme where we discretize the Lie derivative instead of the classical derivatives with respect to $\theta$ and $t$. In particular, we are following the pattern described in \cite{DFP} and \cite{PM}, where an accurate analysis concerning the discretization of degenerate Kolmogorov operators has been made.  The discretization we would like to carry out would make us approximate $Y\rho =  \omega \p_\omega\rho   - \omega \p_\theta\rho - \p_t\rho $ with the following quotient
\begin{equation}
 \frac{\rho(\o \mathrm{e}^\d,\omega - \omega \mathrm{e}^\d-\theta,t-\d)-\rho(\o,\theta,\O,t)}{\d}. 
\end{equation}
The main difficulty when setting an appropriate grid is represented by the factor $\mathrm{e}^\d$, which arises from the term $\o\p_\o$ in the Lie derivative. This problem can be overcome by observing that, thanks to the continuity w.r.t. the derivative $\p_\o$, the solution is also continuous with respect to the Lie derivative $Y_r = -\omega\p_\theta-\p_t$, whose discretization is easier.

In our analysis we assume that the natural frequency distribution $g$ is compactly supported in the set $[-\O_1,\O_1]$, so that the solution we approximate is unique, according to Proposition \ref{Prop-unique}.

We list here the various approximations that we use for the numerical scheme:
\begin{equation}
  \begin{aligned}
  &\p^{2}_{\o\o} \rho(\o,\theta,\O,t) = \frac{\rho(\o+\Delta\o,\theta, \O,t) -2\rho(\o,\theta,\O,t)+\rho(\o-\Delta\o,\theta, \O,t)}{(\Delta\o)^2} + o(\D\o)^2,\\
  & \p_{\o} \rho (\o,\theta,\O,t)= \frac{\rho(\o+\Delta\o,\theta, \O,t) -\rho(\o-\Delta\o,\theta, \O,t)}{2(\Delta\o)}+ o(\D\o)^2,\\
   &Y_r \rho(\o,\theta,\O,t) =  \frac{\rho(\o,\theta-\o\D t,\O,t-\D t) - \rho(\o,\theta,\O,t)}{\D t} + o({\D t}).
  \end{aligned}
\end{equation}
Finally, we approximate the nonlinear term by
\begin{equation}
  -\O- K\int_{-\O_1}^{\O_1}\int_0^{2\pi}\int_{-G_\o}^{G_\o}\rho(\o',\theta',\O,t-\D t)\sin(\theta-\theta')g(\O') d\o'd\theta'd\O' +  o({\D t}).
\end{equation}
This approximation introduces the constant $G_\o$ to overcome the problem of an unlimited interval in $\o$. We observe that, thanks to the exponential decay, we can make this last approximation truly negligible. Finally, we approximate this integral by the method of rectangles, and we  denote as $\tilde\Phi_\rho$ this last approximation. As in the classical case, the various remainders depend on the $L^\infty$ norm of the solution and of some of its derivatives.

\medskip In the following, in order to provide a comprehensive analysis of the model, we will include the cases where the inertial term $m$ and the noise term $D$ are not necessarily equal to one, in order to investigate the changes in the solution as these parameters vary.

We define the approximate operator $\L_a$ as 
\begin{equation}\begin{aligned}
  \L_a \rho(\o,\theta,&\O,t) := \\
  = &\frac{D}{m^2}\frac{\rho(\o+\Delta\o,\theta-\o\D t, \O,t) -2\rho(\o,\theta-\o\D t,\O,t)+\rho(\o-\Delta\o,\theta-\o\D t, \O,t)}{(\Delta\o)^2}  \\
  +&\frac{1}{m} \left(\o +\tilde\Phi_\rho(\theta-\o\D t,\O,t)\right) \frac{\rho(\o+\Delta\o,\theta-\o\D t,, \O,t) -\rho(\o-\Delta\o, \theta-\o\D t,  \O,t)}{2(\Delta\o)} \\
   +& \frac{\rho(\o,\theta-\o\D t,\O,t) - \rho(\o,\theta,\O,t+\D t)}{\D t} + \frac{1}{m}\rho(\o,\theta-\o\D t,\O,t).
\end{aligned}\end{equation}
The previous approximations make $\L_a$ consistent with $\L_\rho$; indeed when $G_\o=+\infty$  we have:
\begin{equation}
  \L_\rho\rho(\o,\theta,\O,t)-\L_a\rho(\o,\theta,\O,t)=o((\D\o)^2+{\D t}).
\end{equation}
In order to define the corresponding grid, let us examine the domain in which we  solve the problem: as anticipated above, we suppose that $\o$ belongs to the bounded interval $[-G_\o,G_\o]$; moreover, by the periodicity in $\theta$ we can suppose that this variable belongs to $[0,2\pi]$ and then we work with periodic conditions. Thus, the natural grid where we discretize our problem is
\begin{equation}\begin{aligned}
  G_r = &\left\{i{\D\o},j{\D\o}{\D t},  k \D \O, n {\D t},\right.\\&\left. -\frac{G_\o}{\D \o}\leq i \leq\frac{G_\o}{\D \o} ,\;\;0 \leq j \leq \frac{2 \pi }{\D \o \D t},\; 0\leq n \leq \frac{T}{\D t},\; -\frac{\O_1}{\D \O} \leq k\leq \frac{\O_1}{\D \O} \right\},\\
  G_0 = & G_r \cap \{n=0\},\;\;\;\;\;G_b = G_r \cap \left\{|i|=\frac{G_\o}{\D \o}\right\},\;\;\;\;\;G_{in} =  G_r \setminus (G_0 \cup G_b).
\end{aligned}\end{equation}
We conclude with one last remark. As we have shown in Lemma \ref{lemmam}, if the initial datum $\rho_0$ is normalized for every $\O$, then the same condition holds true for the solution $\rho$; in order to achieve this, we introduce the following normalization condition
\begin{equation}\label{normdisc}
(\D \o)^2 (\D t) \sum_{\underline{i},\underline{j}}\rho^n_{\underline{i},\underline{j},k}=1,\qquad\forall\;\; 0\leq n \leq \frac{T}{\D t} ,\;-\frac{\O_1}{\D \O} \leq k\leq \frac{\O_1}{\D \O},
\end{equation} 
The approximate Cauchy problem is
\begin{equation}\begin{cases}
  \L_a \rho = 0,\;\;\;\;\;&\mathrm{in}\; G_{in},\\
  \rho = \rho_0,\;\;\;\;\;& \mathrm{in}\; G_0,\\
  \rho = 0,\;\;\;\;\;& \mathrm{in}\; G_b,\\
\end{cases}\end{equation}
with $\rho$ that verifies \eqref{normdisc} and such that $\rho(\o,\theta,\O,t)= \rho(\o,\theta+2\pi,\O,t)$. While the first two equations are the classical approximation of the Cauchy problem \eqref{eq-Cauchy-pbm}, the last is intended to approximate the exponential decay condition.

As in the heat equation case, we need to introduce some stability conditions.
\begin{theorem}
  Suppose that the grid $G_r$ is such that
  \begin{equation}\label{condition}
  \D\o\leq \frac{\sqrt{2D}}{m},\qquad\D t \leq \frac{m^2(\D \o)^2}{2D - m (\D \o)^2},\qquad G_\o \leq \frac{2D}{m\D \o}-\O_1-K,
  \end{equation}
 then every solution to 
\begin{equation}\begin{cases}
  \L_a \rho \leq 0,\;\;\;\;\;&\mathrm{in}\; G_{in},\\
  \rho \geq 0,\;\;\;\;\;& \mathrm{in}\; G_0,\\
  \rho = 0,\;\;\;\;\;& \mathrm{in}\; G_b,
\end{cases}\end{equation}
that verifies \eqref{normdisc} and such that $\rho(\o,\theta,\O,t)= \rho(\o,\theta+2\pi,\O,t)$, is nonnegative in $G_r$.
\end{theorem}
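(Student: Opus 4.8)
The plan is to prove a \emph{discrete maximum principle}: rewrite the scheme as an explicit forward-in-time recursion with nonnegative weights, then propagate the sign of $\rho$ forward from the initial slice $G_0$. First note that the scheme is explicit: in $\L_a\rho$ the only value at the new level $t+\D t$ is $\rho(\o,\theta,\O,t+\D t)$, occurring with coefficient $-1/\D t$. Solving $\L_a\rho\le0$ at an interior node for this value gives
\begin{equation}
 \rho(\o,\theta,\O,t+\D t)\ \ge\ \l_+\,\rho(\o+\D\o,\theta-\o\D t,\O,t)+\l_0\,\rho(\o,\theta-\o\D t,\O,t)+\l_-\,\rho(\o-\D\o,\theta-\o\D t,\O,t),
\end{equation}
where, writing $\Phi:=\tilde\Phi_\rho(\theta-\o\D t,\O,t)$,
\begin{equation}
 \l_\pm=\frac{\D t}{2m\D\o}\Big(\frac{2D}{m\D\o}\pm(\o+\Phi)\Big),\qquad \l_0=1+\D t\Big(\frac1m-\frac{2D}{m^2(\D\o)^2}\Big).
\end{equation}
Since $\theta\mapsto\theta-\o\D t$ is a translation by an integer number of $\theta$-grid steps, by the periodicity assumption the right-hand side is a combination of values of $\rho$ on the preceding time slice; moreover, when $\o=\pm(G_\o-\D\o)$ one outer neighbour lies on $G_b$, where $\rho=0$. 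The whole statement thus reduces to showing $\l_+,\l_0,\l_-\ge0$ and then inducting on the time index.

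To bound the weights I would first estimate the nonlinear coefficient: if $\rho\ge0$ and satisfies \eqref{normdisc} on the slice entering $\tilde\Phi_\rho$, then using $|\sin|\le1$, $\int_\R g=1$, $|\O|\le\O_1$ and the normalization of $\rho$ in $(\o,\theta)$ one gets $|\tilde\Phi_\rho(\theta,\O,t)|\le\O_1+K$, hence at an interior node $|\o+\Phi|\le G_\o+\O_1+K\le\tfrac{2D}{m\D\o}$ by the third inequality in \eqref{condition}; this is precisely $\l_\pm\ge0$. For $\l_0$, the condition $\l_0\ge0$ rewrites as $m^2(\D\o)^2\ge\D t\,(2D-m(\D\o)^2)$, which is automatic when $2D-m(\D\o)^2\le0$ and is exactly the second inequality in \eqref{condition} otherwise, the first inequality in \eqref{condition} being the accompanying restriction on $\D\o$.

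The argument is then closed by induction on the time index $n$. On $G_0$ ($n=0$) we have $\rho=\rho_0\ge0$, and on $G_b$ we have $\rho=0$ at every level. Assume $\rho\ge0$ on all slices of index $\le n$ (reading the index $-1$ as $\rho_0$ for the first step). The nonlinear coefficient governing the advance to level $n+1$ is built from a slice of index $\le n$, so $|\tilde\Phi_\rho|\le\O_1+K$ by \eqref{normdisc}, whence $\l_\pm,\l_0\ge0$ by the previous paragraph; the three arguments on the right of the displayed inequality are values of $\rho$ on slice $n$ (or $0$ on $G_b$), hence nonnegative, so $\rho\ge0$ at every interior node of slice $n+1$, and with $\rho=0$ on $G_b$ this gives $\rho\ge0$ on slice $n+1$. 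Hence $\rho\ge0$ on all of $G_r$.

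The main obstacle is keeping this induction free of circular reasoning: the nonnegativity of $\l_\pm$ rests on the bound $|\tilde\Phi_\rho|\le\O_1+K$, which itself requires the nonnegativity of $\rho$ on an \emph{earlier} slice. This is exactly why the time-lag in the definition of $\tilde\Phi_\rho$ (it is evaluated using $\rho$ at $t-\D t$) is essential, and why \eqref{normdisc} must be taken as a hypothesis: it is what turns the mass of $\rho$ on an already-nonnegative slice into the prescribed value $1$, so that the bound on $\tilde\Phi_\rho$ can be fed into the induction. Once this is arranged, verifying that the three conditions in \eqref{condition} simultaneously force $\l_+,\l_0,\l_-\ge0$ is the routine computation above; the consistency estimate $\L_\rho\rho-\L_a\rho=o((\D\o)^2+\D t)$ plays no role.
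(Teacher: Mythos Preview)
Your argument is correct and is essentially the same as the paper's: both proofs proceed by induction on the time index, rewrite $\L_a\rho\le0$ as an explicit update $\rho^{n+1}_{i,j,k}\ge \l_+\rho^{n}_{i+1,j-i,k}+\l_0\rho^{n}_{i,j-i,k}+\l_-\rho^{n}_{i-1,j-i,k}$ with exactly the weights you wrote down, bound $|\tilde\Phi_\rho|\le K+\O_1$ from \eqref{normdisc} and the inductive nonnegativity, and then read off $\l_\pm,\l_0\ge0$ from the three inequalities in \eqref{condition}. Your write-up is in fact more explicit than the paper's about why the induction is not circular (the time-lag in $\tilde\Phi_\rho$) and about the role of periodicity and the boundary $G_b$, but the underlying proof is the same discrete maximum principle.
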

\begin{proof}
The proof runs by induction. Let us set
\begin{equation}
  \rho^{n}_{i,j,k} = \rho(i \D \o,j\D \o \D t, k \D \O, n \D t).
\end{equation}
As $\rho\geq 0$ in $G_0$, the condition holds for $n=0$. Let now suppose true the claim for $n$.

We notice that $\L_a\rho \geq 0$ can be read as
\begin{equation}\begin{aligned}
   \rho^{n+1}_{i,j,k}&\geq \left(1-\frac{2D\D t}{m^2(\D\o)^2} +\frac{\D t}{m}\right) \rho^{n}_{i,j-i,k}\\&+ \left(\frac{D\D t}{m^2(\D\o)^2}-\frac{\D t}{2\D\o m}(i\Delta\o+\tilde	\Phi_\rho ((j-i)\D\o,k\D\O,n\D t))\right)\rho^{n}_{i-1,j-i,k} \\
   &+\left(\frac{D\D t}{m^2(\D \o)^2} + \frac{\D t}{2\D\o m}(i\D\o+\tilde\Phi_\rho ((j-i)\D\o,k\D\O,n\D t)\right)\rho^{n}_{i+1,j-i,k}, 
\end{aligned}\end{equation}
where 
\begin{equation}
  \tilde\Phi_\rho((j-i)\D\o,k\D\O,n\D t)=-k\D\O-K\D \O (\D \o)^2 (\D t) \sum_{\underline{i},\underline{j},\underline{k}}\rho^n_{\underline{i},\underline{j},\underline{k}}g(\underline{k})\sin(j-i+\underline{i}).
\end{equation}
We employ the normalization condition \eqref{normdisc} and the normalization of $g$ to note that $|\tilde\Phi_\rho|\leq K+\O_1$, then; if \eqref{condition} holds, it follows  $\rho^{n+1}_{i,j,k}\geq 0$.
\end{proof}
\\\\We illustrate now some numerical results. The following quantities have been analysed
\begin{equation}
  \begin{aligned}
  &|r(t)| = \left|\int_0^{2 \pi}\int_\R\int_\R \mathrm{e}^{\mathrm{i}\theta}\rho(\o,\theta,\Omega,t)g(\Omega)d\Omega d\o d\theta\right|,\\
  &|s(t)| = \left|\int_0^{2 \pi}\int_\R\int_\R \mathrm{e}^{\mathrm{i}\o}\rho(\o,\theta,\Omega,t)g(\Omega)d\Omega d\o d\theta\right|.
  \end{aligned}
\end{equation}
The first one represents the phase coherence, and gives us information about the phase synchrony, while the latter is the analogous for frequencies.

We have considered identical oscillators with natural frequency $\O=0$, and with the following initial conditions
\begin{equation}
  \rho_0(\o,\theta)= \frac{\mathrm{e}^{-{\o^2}}(\sin(\theta)+1)}{N(\o+1)},\;\;\;\;\;N = \int_\R\int_{0}^{2\pi}\frac{\mathrm{e}^{-\o^2}(\sin(\theta)+1)}{(\o+1)}d\o d\theta.
\end{equation}
As we could have expected, we see in Figure \ref{1} that an increase in the coupling strength $K$ leads to a greater phase synchrony, while in Figure \ref{2} we see that the frequency synchronization seems to be asymptotically independent of $K$. 

\begin{figure}[h]
 \begin{minipage}{0.48\linewidth}
  \centering
  \includegraphics[width=8cm, height=8cm]{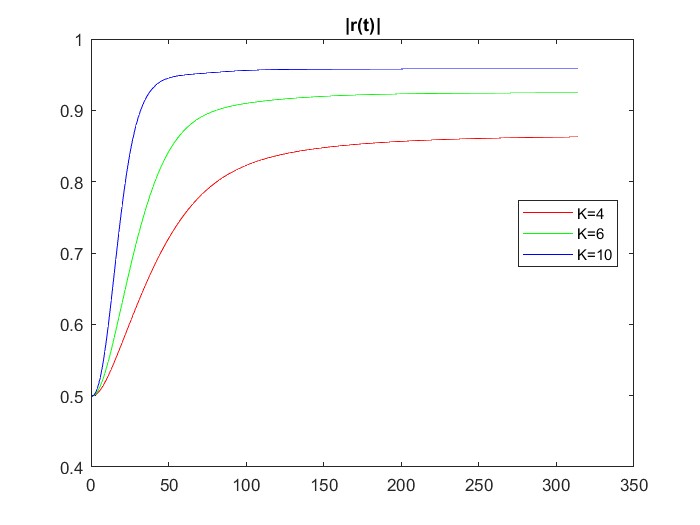}
  \caption{Time evolution of the phase coherence, $m=1,D=1,T=10,\Delta t = 0.0317$.}
  \label{1}
 \end{minipage}\hspace{0.2cm}
 \begin{minipage}{0.48\linewidth}
  \centering
  \includegraphics[width=8cm, height=8cm]{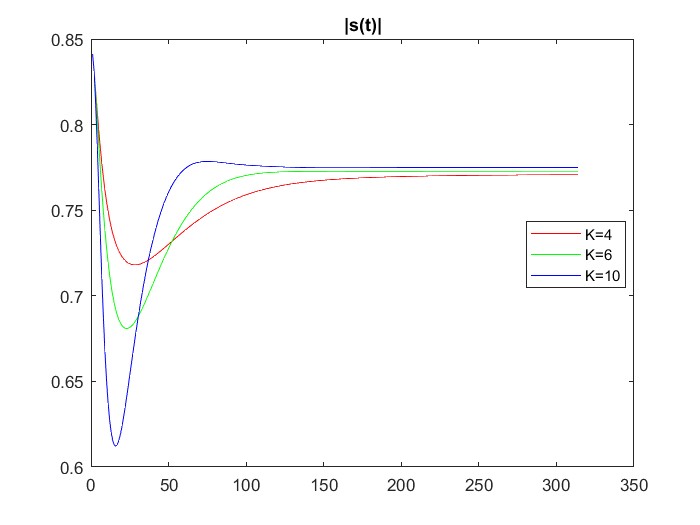}
  \caption{Time evolution of the frequency coherence, $m=1,D=1,T=10,\Delta t = 0.0317$.}
  \label{2}
 \end{minipage}
\end{figure} 

Conversely, changes in the inertial term $m$ seem to have the opposite nature: indeed, in Figure \ref{3} we see that the asymptotic behaviour of phase synchronization does not change; on the other side, in Figure \ref{4} we notice that the behaviour of frequency synchronization significantly depends on this parameter, as one can observe by comparing this case with the one in Figure \ref{2}. In particular, we see that the asymptotic behaviour of the frequency coherence increases as $m$ increases.

\begin{figure}[!ht]
 \begin{minipage}{0.48\linewidth}
  \centering
  \includegraphics[width=8cm, height=8cm]{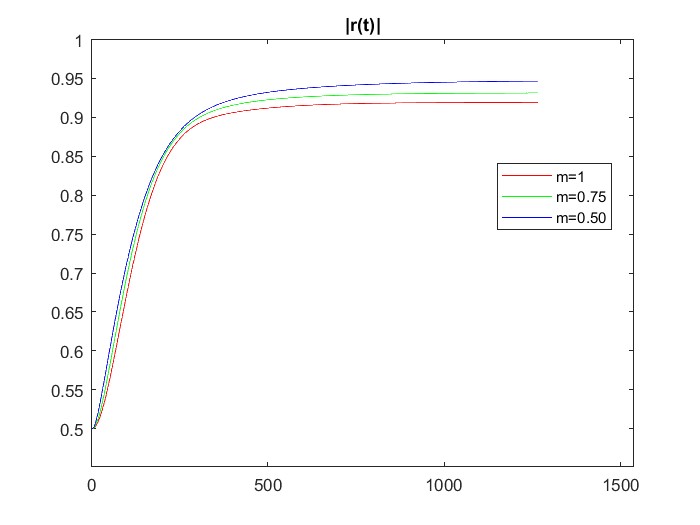}
  \caption{Time evolution of the phase coherence, $D=1,K=6,T=10, \Delta t = 0.0079$.}
  \label{3}
 \end{minipage}\hspace{0.2cm}
 \begin{minipage}{0.48\linewidth}
  \centering
  \includegraphics[width=8cm, height=8cm]{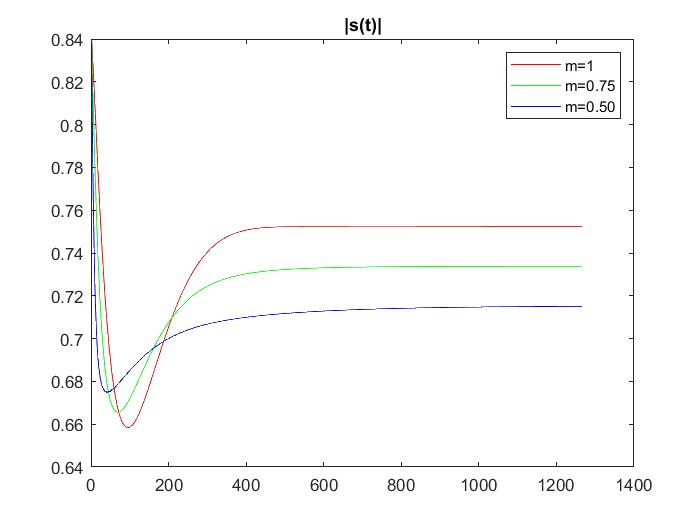}
  \caption{Time evolution of the frequency coherence, $D=1,K=6,T=10, \Delta t = 0.0079$.}
  \label{4}
 \end{minipage}
\end{figure}
Regarding changes in the noise term $D$, we see  what is expected in the classical Kuramoto model. We remind indeed (see for instance \cite{S}) that in the original model ($m=D=0$) whenever we have identical oscillators the phase synchrony is total (\emph{i.e.} $|r|=1$). Moreover, these oscillators synchronize at the same frequency (therefore $|s|=1$). Our numerical results are in good agreement with the  theoretical ones: indeed, in  Figure \ref{5} and Figure \ref{6} we see that the phase and frequency coherence increase for decreasing $D$.
\begin{figure}[!ht]
 \begin{minipage}{0.48\linewidth}
  \centering
  \includegraphics[width=8cm, height=8cm]{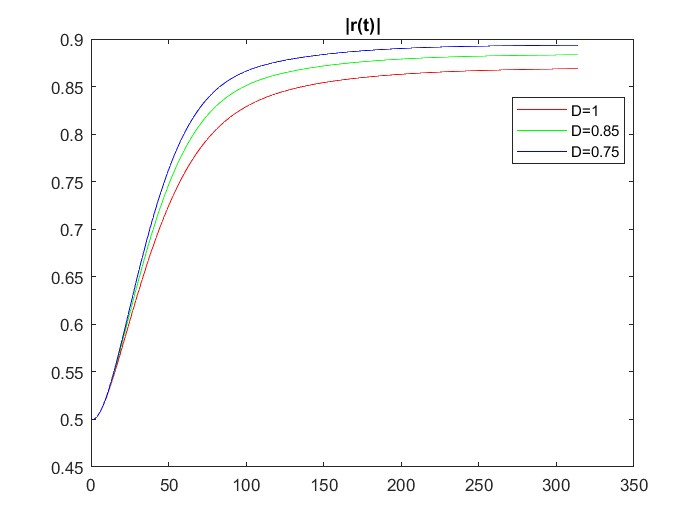}
  \caption{Time evolution of the phase coherence, $m=1,K=6,T=10,\Delta t = 0.0317$.}
  \label{5}
 \end{minipage}\hspace{0.2cm}
 \begin{minipage}{0.48\linewidth}
  \centering
  \includegraphics[width=8cm, height=8cm]{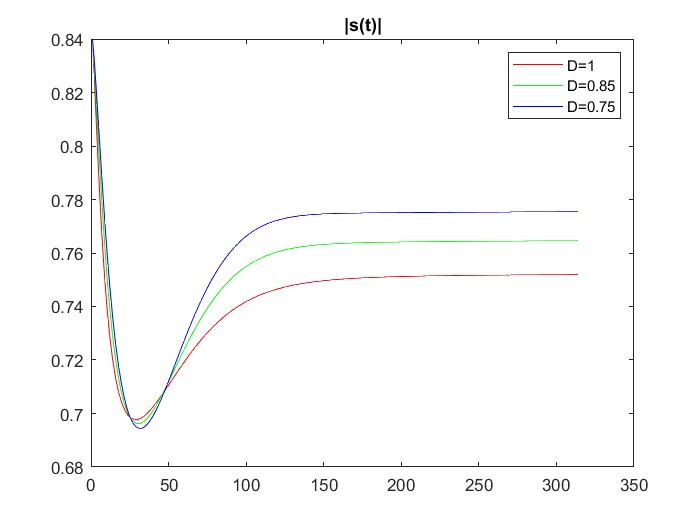}
  \caption{Time evolution of the frequency coherence, $m=1,K=6,T=10,\Delta t = 0.0317$.}
  \label{6}
 \end{minipage}
\end{figure}

\begin{figure}[!t]
 \begin{minipage}{0.48\linewidth}
  \centering
  \includegraphics[width=8cm, height=8cm]{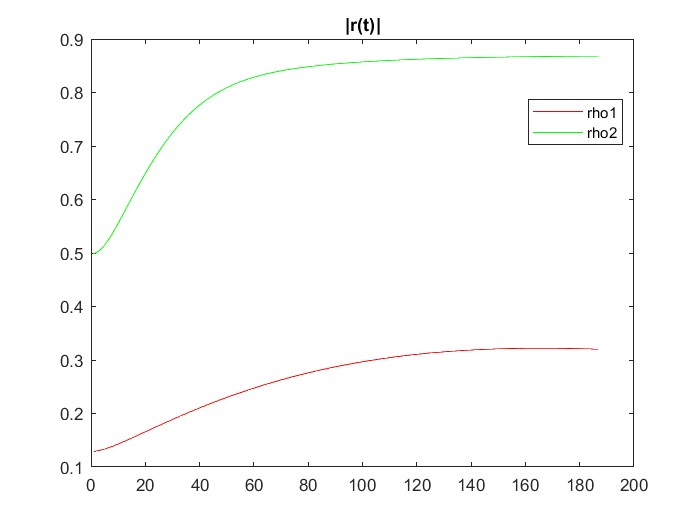}
  \caption{Time evolution of the phase coherence, $m=1,D=1,K=4,T=10,\\\Delta t=0.053$.}
  \label{rho1}
 \end{minipage}\hspace{0.2cm}
 \begin{minipage}{0.48\linewidth}
  \centering
  \includegraphics[width=8cm, height=8cm]{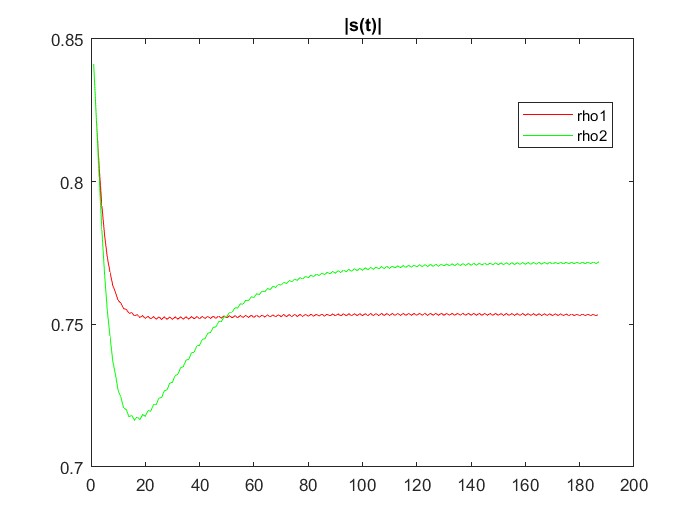}
  \caption{Time evolution of the frequency coherence, $m=1,D=1,K=4,T=10,\\\Delta t=0.053$.}
  \label{rho2}
 \end{minipage}
\end{figure} 
Finally, in Figure \ref{rho1} and Figure \ref{rho2} we compare two different initial conditions
\begin{equation}
  \begin{aligned}
    &\rho_{0,1}(\o,\theta)= \frac{\mathrm{e}^{-\o^2}(\sin(\frac{\theta}{2})+1)}{N_1(\o+1)},\;\;\;\;\;N_1 = \int_\R\int_{0}^{2\pi}\frac{\mathrm{e}^{-\o^2}(\sin(\frac{\theta}{2})+1)}{(\o+1)}d\o d\theta,\\
    &  \rho_{0,2}(\o,\theta)= \frac{\mathrm{e}^{-\o^2}(\sin(\theta)+1)}{N_2(\o+1)},\;\;\;\;\;N_2 = \int_\R\int_{0}^{2\pi}\frac{\mathrm{e}^{-\o^2}(\sin(\theta)+1)}{(\o+1)}d\o d\theta,
  \end{aligned}
\end{equation}
and we observe two quite different behaviours, suggesting great susceptibility to initial conditions.

We conclude with one last remark. These numerical results are in agreement with the ones in \cite{AS}, where the finite size system \eqref{spig} was analysed. The point we are making is that we have used a different method here: indeed, in \cite{AS} the authors used a Monte Carlo method (thus a stochastic method) in the finite size case (with $N=30000$ oscillators), while we have employed a finite difference method to analyse \eqref{ultraparabolica}, that is the limit of $\eqref{spig}$ as $N \rightarrow \infty$.

\end{document}